\newtheorem{theorem}{Theorem}[section]
\newtheorem{lemma}{Lemma}[section]
\newtheorem{corollary}{Corollary}[section]
\newtheorem{remark}{Remark}[section]
\newtheorem{proposition}{Proposition}[section]
\newcommand{\comm}[1]{{\color{black}#1}}
\newcommand{\revise}[1]{{\color{black}#1}}
\begin{document}
\title{\revise{A  Parallel Augmented Subspace Method for Eigenvalue
Problems}\footnote{This work was supported in part
by the National Key Research and Development Program of China (2019YFA0709601), Science Challenge Project (No. TZ2019002),
National Natural Science Foundations of China (NSFC 11771434, 11801021, 91730302, 91630201), the National Center for Mathematics and Interdisciplinary Science, CAS.}}
\author{Fei Xu\footnote{Beijing Institute for Scientific and Engineering
Computing, College of applied sciences, Beijing University of Technology, Beijing, 100124, China
(xufei@lsec.cc.ac.cn).},\ \ \
Hehu Xie\footnote{ICMSEC, LSEC, NCMIS,
Academy of Mathematics and Systems Science, Chinese Academy of
Sciences, Beijing 100190, China, and School of Mathematical
Sciences, University of Chinese Academy of Sciences, Beijing,
100049, China ({\tt hhxie@lsec.cc.ac.cn}).} \ \ \ and \ \
Ning Zhang\footnote{Institute of Electrical Engineering, Chinese Academy of Sciences, No.6,
Beiertiao, Zhongguancun, Haidian, Beijing 100190, China (zhangning@\allowbreak mail.iee.ac.cn).}
} \date{} \maketitle
\begin{abstract}
A type of parallel augmented subspace scheme for eigenvalue problems is proposed
by using coarse space in the multigrid method.
With the help of coarse space in multigrid method, solving the eigenvalue problem
in the finest space is decomposed into solving the standard linear boundary value
problems and very low dimensional eigenvalue problems.
The computational efficiency can be improved since there is no
direct eigenvalue solving in the finest space and the multigrid method can act as the
solver for the deduced linear boundary value problems.
Furthermore, for different eigenvalues, the corresponding boundary value problem
and low dimensional eigenvalue problem can be solved in the parallel way
since they are independent of each other and there exists no data exchanging.
This property means that we do not need to do the orthogonalization in the highest
dimensional spaces. This is the main aim of this paper since avoiding orthogonalization
can improve the scalability of the proposed numerical method. Some numerical examples are
provided to validate the proposed parallel augmented subspace method.

\vskip0.3cm {\bf Keywords.} eigenvalue problems, parallel augmented subspace method,
multigrid method, parallel computing.

\vskip0.2cm {\bf AMS subject classifications.} 65N30, 65N25, 65L15, 65B99.
\end{abstract}

\section{Introduction}
Solving large scale eigenvalue problems is one of  fundamental problems in modern science
and engineering society. It is always a very difficult task to solve high-dimensional
eigenvalue problems which come from practical physical and chemical sciences.
Compared with boundary value problems, there are less efficient numerical
methods for solving eigenvalue problems with optimal complexity.
The large scale eigenvalue problems pose significant challenges for scientific computing.
In order to solve these large scale sparse eigenvalue problems, Krylov subspace type methods
(Implicitly Restarted Lanczos/Arnoldi Method (IRLM/IRAM) \cite{Sorensen}),
the Preconditioned INVerse ITeration (PINVIT) method \cite{PINVIT,BramblePasciakKnyazev,Knyazev},
the Locally Optimal Block Preconditioned Conjugate Gradient (LOBPCG) method \cite{Knyazev_Lobpcg,KnyazevNeymeyr},
and the Jacobi-Davidson-type techniques \cite{Bai} have been developed.
All these popular methods include the orthogonalization step which is a bottleneck for designing
\revise{efficient parallel schemes for determining relatively many eigenpairs}.
Recently, a type of multilevel correction method is proposed for solving
eigenvalue problems in \cite{LinXie_MultiLevel,Xie_IMA,Xie_JCP}. In this multilevel
correction scheme, there exists an augmented subspace which is constructed with the help of coarse space
from the multigrid method. The application of this augmented subspace leads to that
the solution of eigenvalue problem on the final level of mesh can be reduced to a series
of solutions of boundary value problems on the multilevel meshes and a series of
solutions of the eigenvalue problem on the low dimensional augmented subspace.
The multilevel correction method gives a way to construct the multigrid method
for eigenvalue problems.

It is well known that the multigrid method \cite{BrennerScott,Shaidurov,Xu} provides
an optimal numerical method for linear elliptic boundary value problems.
The error bounds of the approximate solution obtained from these efficient
numerical algorithms are comparable to the theoretical bounds determined
by the finite element discretization, while the amount of computational
work involved is only proportional to the number of unknowns
in the discretized equations. For more details of the multigrid method,
please refer to \cite{BankDupont,Bramble,BramblePasciak,BrambleZhang,BrennerScott,
Hackbusch,Hackbusch_Book,McCormick,ScottZhang,Shaidurov,Xu,Xu_Two_Grid} and the references cited therein.

This paper aims to design a type of  parallel method for eigenvalue problems
with the help of the coarse space from the multigrid method.
It is well known that there exist many work considering the applications of multigrid method
for eigenvalue problems. For example, there have existed applications of the multigrid method
to the PINVIT and LOBPCG methods. But, in these applications, the multigrid method only acts
as the precondition for the included linear equations. This means that the multigrid method
only improves the efficiency of the inner iteration and does not change the outer iteration.
Unfortunately,  in these state-of-the-art, the applications of multigrid method
do not deduce a new eigensolver. The idea of designing the  parallel
augmented subspace method for eigenvalue problems is based on the combination of
the multilevel correction method \cite{LinXie_MultiLevel,Xie_IMA,Xie_JCP,XieZhangOwhadi} and parallel
computing technique. With the help of coarse space in multigrid method, the eigenvalue
problem solving is transformed into a series of solutions of the corresponding linear
boundary value problems on the sequence of finite element spaces and eigenvalue problems
on a very low dimensional augmented space. Further, in order to improve the parallel scalability,
\revise{the approximate eigenpairs are computed independently by a correction process}.
This property means there is no orthogonalization in the highest dimensional space which account
for a large portion of wall time in the parallel computation.

An outline of the paper goes as follows. In Section 2, we introduce the
finite element method for the eigenvalue problem and the corresponding basic
error estimates. A type of parallel augmented subspace method
for solving the eigenvalue problem by finite element method
is given in Section 3,  and the corresponding computational
work estimate are given in Section 4.
In Section 5, four numerical examples are presented to validate our
theoretical analysis. Some concluding remarks are given in the last section.

\section{Finite element method for eigenvalue problem}\label{sec;preliminary}
This section is devoted to introducing some notation and the standard finite element
method for the eigenvalue problem. In this paper, we shall use the standard notation
for Sobolev spaces $W^{s,p}(\Omega)$ and their
associated norms and semi-norms (cf. \cite{Adams}). For $p=2$, we denote
$H^s(\Omega)=W^{s,2}(\Omega)$ and
$H_0^1(\Omega)=\{v\in H^1(\Omega):\ v|_{\partial\Omega}=0\}$,
where $v|_{\Omega}=0$ is in the sense of trace,
$\|\cdot\|_{s,\Omega}=\|\cdot\|_{s,2,\Omega}$.
In some places, $\|\cdot\|_{s,2,\Omega}$ should be viewed as piecewise
defined if it is necessary.
The letter $C$ (with or without subscripts) denotes a generic
positive constant which may be different at
 its different occurrences through the paper.

For simplicity, we consider the following model problem to illustrate the main idea:
Find $(\lambda, u)$ such that
\begin{equation}\label{LaplaceEigenProblem}
\left\{
\begin{array}{rcl}
-\nabla\cdot(\mathcal{A}\nabla u)&=&\lambda u, \quad {\rm in} \  \Omega,\\
u&=&0, \ \ \quad {\rm on}\  \partial\Omega,
\end{array}
\right.
\end{equation}
where $\mathcal{A}$ is a symmetric and positive definite matrix with suitable
regularity, $\Omega\subset\mathcal{R}^d\ (d=2,3)$ is a bounded domain with
Lipschitz boundary $\partial\Omega$.

In order to use the finite element method to solve
the eigenvalue problem (\ref{LaplaceEigenProblem}), we need to define
the corresponding variational form as follows:
Find $(\lambda, u )\in \mathcal{R}\times V$ such that $a(u,u)=1$ and
\begin{eqnarray}\label{weak_eigenvalue_problem}
a(u,v)=\lambda b(u,v),\quad \forall v\in V,
\end{eqnarray}
where $V:=H_0^1(\Omega)$ and
\begin{equation}\label{inner_product_a_b}
a(u,v)=\int_{\Omega}\mathcal{A}\nabla u\cdot\nabla v d\Omega,
 \ \ \ \  \ \ b(u,v) = \int_{\Omega}uv d\Omega.
\end{equation}
The norms $\|\cdot\|_a$ and $\|\cdot\|_b$ are defined by
\begin{eqnarray*}
\|v\|_a=\sqrt{a(v,v)}\ \ \ \ \ {\rm and}\ \ \ \ \ \|v\|_b=\sqrt{b(v,v)}.
\end{eqnarray*}
It is well known that the eigenvalue problem (\ref{weak_eigenvalue_problem})
has an eigenvalue sequence $\{\lambda_j \}$ (cf. \cite{BabuskaOsborn_1989,Chatelin}):
$$0<\lambda_1\leq \lambda_2\leq\cdots\leq\lambda_k\leq\cdots,\ \ \
\lim_{k\rightarrow\infty}\lambda_k=\infty,$$ and associated
eigenfunctions
$$u_1, u_2, \cdots, u_k, \cdots,$$
where $a(u_i,u_j)=\delta_{ij}$ ($\delta_{ij}$ denotes the Kronecker function).
In the sequence $\{\lambda_j\}$, the $\lambda_j$ are repeated according to their
geometric multiplicity.
For our analysis,  recall the following definition for the smallest eigenvalue
(see \cite{BabuskaOsborn_Book,Chatelin})
\begin{eqnarray}\label{Smallest_Eigenvalue}
\lambda_1 = \min_{0\neq w\in V}\frac{a(w,w)}{b(w,w)}.
\end{eqnarray}

Now, let us define the finite element approximations of the problem
(\ref{weak_eigenvalue_problem}). First we generate a shape-regular
 triangulation $\mathcal{T}_h$ of the computing domain $\Omega\subset \mathcal{R}^d\
(d=2,3)$ into triangles or rectangles for $d=2$ (tetrahedrons or
hexahedrons for $d=3$). The diameter of a cell $K\in\mathcal{T}_h$
is denoted by $h_K$ and the mesh size $h$ describes  \revise{the maximal diameter} of all cells
$K\in\mathcal{T}_h$.
Based on the mesh $\mathcal{T}_h$, we can construct a finite element space denoted by
 $V_h \subset V$. For simplicity, we set $V_h$ as the linear finite
 element space which is defined as follows
\begin{equation}\label{linear_fe_space}
  V_h = \big\{ v_h \in C(\Omega)\ \big|\ v_h|_{K} \in \mathcal{P}_1,
  \ \ \forall K \in \mathcal{T}_h\big\}\cap H^1_0(\Omega),
\end{equation}
where $\mathcal{P}_1$ denotes the linear function space.

The standard finite element scheme for eigenvalue
problem (\ref{weak_eigenvalue_problem}) is:
Find $(\bar{\lambda}_h, \bar{u}_h)\in \mathcal{R}\times V_h$
such that $a(\bar{u}_h,\bar{u}_h)=1$ and
\begin{eqnarray}\label{Weak_Eigenvalue_Discrete}
a(\bar{u}_h,v_h)=\bar{\lambda}_h b(\bar{u}_h,v_h),\quad\ \  \ \forall v_h\in V_h.
\end{eqnarray}
From \cite{BabuskaOsborn_1989,BabuskaOsborn_Book}, the  discrete eigenvalue
problem (\ref{Weak_Eigenvalue_Discrete}) has eigenvalues:
$$0<\bar{\lambda}_{1,h}\leq \bar{\lambda}_{2,h}\leq\cdots\leq \bar{\lambda}_{k,h}
\leq\cdots\leq \bar{\lambda}_{N_h,h},$$
and corresponding eigenfunctions
\begin{eqnarray}\label{Discrete_Eigenfunctions}
\bar{u}_{1,h}, \bar{u}_{2,h}, \cdots, \bar{u}_{k,h}, \cdots, \bar{u}_{N_h,h},
\end{eqnarray}
where $a(\bar{u}_{i,h},\bar{u}_{j,h})=\delta_{ij}$, $1\leq i,j\leq N_h$ ($N_h$ is
the dimension of the finite element space $V_h$).
From the min-max principle \cite{BabuskaOsborn_1989,BabuskaOsborn_Book},
\revise{the eigenvalues of (\ref{Weak_Eigenvalue_Discrete}) provide upper bounds for the first
$N_h$ eigenvalues of (\ref{weak_eigenvalue_problem})}
\begin{eqnarray}\label{Uppero_Bound_Result}
\lambda_i \leq \bar\lambda_{i,h},\ \ \ \ 1\leq i\leq N_h.
\end{eqnarray}

In order to measure the error of the finite element space to the desired function, we define the following notation
\begin{eqnarray}\label{Delta_V_h}
\delta(w,V_h) = \inf_{v_h\in V_h}\|w-v_h\|_a,\ \ \ {\rm for}\ w\in V.
\end{eqnarray}
In this paper, we also need the following quantity for error analysis:
\begin{eqnarray}
\eta(V_h)&=&\sup_{\substack{ f\in L^2(\Omega)\\ \|f\|_b=1}}\inf_{v_h\in V_h}\|Tf-v_h\|_{a},\label{eta_a_h_Def}
\end{eqnarray}
where $T:L^2(\Omega)\rightarrow V$ is defined as
\begin{equation}\label{laplace_source_operator}
\revise{a(Tf,v) = b(f,v), \ \ \ \ \  \forall v\in V\ \ {\rm for}\  f \in L^2(\Omega).}
\end{equation}

In order to understand the method more clearly, we state the error estimate for the
eigenpair approximation by the finite element method. For this aim,
we define the finite element projection $\mathcal P_h$ as follows
\begin{eqnarray}\label{Energy_Projection}
\revise{a(\mathcal P_h w, v_h) = a(w,v_h),\ \ \ \ \forall v_h\in V_h \ \ {\rm for}\  w\in V.}
\end{eqnarray}
It is obvious that
\begin{eqnarray}\label{Delta_V_h_P_h}
\revise{\|u-\mathcal P_h u\|_a = \inf_{w_h\in V_h}\|u-w_h\|_a = \delta(u,V_h),\ \ \ \forall u\in V.}
\end{eqnarray}

The following Rayleigh quotient expansion of the eigenvalue error is the tool
to obtain the error estimates of eigenvalue approximations.
\begin{lemma}(\cite{BabuskaOsborn_1989})\label{Rayleigh_quotient_expansion_lem}
Assume $(\lambda,u)$ is an eigenpair of the eigenvalue problem
(\ref{weak_eigenvalue_problem}). Then for any $w \in V\backslash\{0\}$,
the following expansion holds:
\begin{equation}\label{Rayleigh_quotient_expansion}
\frac{a(w,w)}{b(w,w)} - \lambda = \frac{a(w-u,w-u)}{b(w,w)} - \lambda\frac{b(w-u,w-u)}{b(w,w)}.
\end{equation}
\end{lemma}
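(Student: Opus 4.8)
The plan is to carry out a direct algebraic manipulation starting from the left-hand side, exploiting the variational characterization $a(u,v)=\lambda b(u,v)$ of the exact eigenpair. First I would introduce the error $e := w-u$ and expand both bilinear forms by bilinearity: $a(w,w) = a(u,u) + 2a(u,e) + a(e,e)$ and $b(w,w) = b(u,u) + 2b(u,e) + b(e,e)$. The key step is to replace the cross terms: since $(\lambda,u)\in M(\lambda)$ satisfies $a(u,v)=\lambda b(u,v)$ for all $v\in V$, taking $v=e$ gives $a(u,e)=\lambda b(u,e)$, and taking $v=u$ gives $a(u,u)=\lambda b(u,u)$. Substituting these identities, $a(w,w) = \lambda b(u,u) + 2\lambda b(u,e) + a(e,e) = \lambda\bigl(b(u,u)+2b(u,e)\bigr) + a(e,e)$.

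Next I would notice that $b(u,u)+2b(u,e) = b(w,w) - b(e,e)$, again by the bilinear expansion of $b(w,w)$ rearranged. Hence $a(w,w) = \lambda b(w,w) - \lambda b(e,e) + a(e,e)$. Dividing through by $b(w,w)$ (which is nonzero since $w\neq 0$ and $b$ is an inner product) yields
\begin{eqnarray*}
\frac{a(w,w)}{b(w,w)} = \lambda + \frac{a(e,e)}{b(w,w)} - \lambda\frac{b(e,e)}{b(w,w)},
\end{eqnarray*}
which upon moving $\lambda$ to the left and recalling $e=w-u$ is precisely (\ref{Rayleigh_quotient_expansion}), with $b(w-u,w-u)$ written in the paper's shorthand $(w-u,w-u)$. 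This completes the argument.

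There is essentially no serious obstacle here: the identity is a formal consequence of bilinearity together with the weak eigenvalue equation, and holds for \emph{any} $u\in M(\lambda)$ because only the relations $a(u,v)=\lambda b(u,v)$ were used, never a specific normalization of $u$. The one point requiring a word of care is that the expansion uses symmetry of $a(\cdot,\cdot)$ and $b(\cdot,\cdot)$ (so that $2a(u,e)$ collects the two cross terms $a(u,e)+a(e,u)$), which is guaranteed here since $\mathcal{A}$ is symmetric; I would simply note this in passing rather than dwell on it.
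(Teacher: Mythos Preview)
Your argument is correct and is in fact the standard proof of this identity. Note, however, that the paper does not actually supply its own proof of this lemma: it merely cites \cite{BabuskaOsborn_1989} and uses the result. So there is nothing in the paper to compare against; your direct algebraic expansion using $a(u,v)=\lambda b(u,v)$ and the symmetry of both bilinear forms is exactly the classical derivation one finds in the cited reference.
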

The following lemma is similar to the corresponding results in
\cite{BabuskaOsborn_Book,Chatelin,XieZhangOwhadi}.
Since this paper considers the parallel scheme for different eigenpair, we state the error estimates for any single eigenpair.
In order to analyze and understand the proposed numerical algorithms in this paper, we use the following error estimates
from \cite{XieZhangOwhadi} which include only explicit constants. For the proof, please refer to \cite{XieZhangOwhadi}.
\begin{lemma}(\cite[Lemma 3.3]{XieZhangOwhadi})\label{Error_Estimate_Theorem}
Let  $(\lambda,u)$ denote an exact eigenpair of the eigenvalue problem (\ref{weak_eigenvalue_problem}).
Assume the eigenpair approximation $(\bar\lambda_{i,h},\bar u_{i,h})$ has the property that
$\bar\mu_{i,h}=1/\bar\lambda_{i,h}$ is closest to $\mu=1/\lambda$.
The corresponding spectral projectors $E_{i,h}: V\mapsto {\rm span}\{\bar u_{i,h}\}$
and $E: V\mapsto {\rm span}\{u\}$ are defined as follows
\begin{eqnarray*}
\revise{a(E_{i,h}w,\bar u_{i,h}) = a(w,\bar u_{i,h}),\ \ \ \ {\rm for}\  w\in V,}
\end{eqnarray*}
and
\begin{eqnarray*}
\revise{a(Ew, u) = a(w, u),\ \ \ \ \ {\rm for}\  w\in V.}
\end{eqnarray*}
Then the following error estimate holds
\begin{eqnarray}\label{Energy_Error_Estimate}
\|u-E_{i,h}u\|_a&\leq& \sqrt{1+\frac{\bar\mu_{1,h}}{\delta_{\lambda,h}^2}\eta^2(V_h)}\|(I-\mathcal P_h)u\|_a,
\end{eqnarray}
where $\eta(V_h)$ is define in (\ref{eta_a_h_Def}) and $\delta_{\lambda,h}$ is defined as follows
\begin{eqnarray}\label{Definition_Delta}
\delta_{\lambda,h} &:=& \min_{j\neq i}|\bar\mu_{j,h}-\mu|=\min_{j\neq i} \Big|\frac{1}{\bar\lambda_{j,h}}-\frac{1}{\lambda}\Big|.
\end{eqnarray}
Furthermore, the eigenvector approximation $\bar u_{i,h}$ has following
error estimate in $L^2$-norm
\begin{eqnarray}\label{L2_Error_Estimate}
\|u-E_{i,h}u\|_b &\leq&\Big(1+\frac{\bar\mu_{1,h}}{\delta_{\lambda,h}}\Big)\eta(V_h)\|u-E_{i,h}u\|_a.
\end{eqnarray}
\end{lemma}
\revise{For simplicity of notation, we assume that the eigenvalue gap $\delta_{\lambda,h}$
has a uniform lower bound which is denoted by $\delta_\lambda$ (which can be seen as the
``true" separation of the eigenvalue $\lambda$ from others) in the following parts of this paper.
This assumption is reasonable when the mesh size is small enough. We refer to
\cite[Theorem 4.6]{Saad1} and Lemma \ref{Error_Estimate_Theorem} in this paper for details of the
dependence of error estimates on the eigenvalue gap.
The following alternative error estimates based on Lemma \ref{Error_Estimate_Theorem} are useful for analyzing
the proposed method in Section \ref{Section_3}}.
\begin{lemma}(\cite[Lemma 3.4]{XieZhangOwhadi})\label{Error_Superclose_Theorem}
Under the conditions of Lemma \ref{Error_Estimate_Theorem}, the following error estimates hold
\begin{eqnarray}
\|u-\bar u_{i,h}\|_a &\leq& \frac{1}{1-D_\lambda(V_h)\eta(V_h)}\|u-\mathcal P_hu\|_a,\label{Err_Norm_1_Superclose}\\
\|\lambda u - \bar\lambda_{i,h}\bar u_{i,h}\|_b &\leq& C_\lambda(V_h)
\eta(V_h)\|u-\bar u_{i,h}\|_a, \label{Err_Norm_0_Lambda}\\
\|u - \bar u_{i,h}\|_b &\leq& 2\Big(1+\frac{1}{\lambda_1\delta_\lambda}\Big)\eta(V_h)\|u- \bar u_{i,h}\|_a,\label{Error_2}
\end{eqnarray}
where $C_\lambda(V_h)$ and $D_\lambda(V_h)$ are defined as follows
\begin{eqnarray}
C_\lambda(V_h) = 2|\lambda|\Big(1+\frac{1}{\lambda_1\delta_\lambda}\Big)
+ \bar\lambda_{i,h}\sqrt{1+\frac{1}{\lambda_1\delta_\lambda^2}\eta^2(V_h)},
\end{eqnarray}
and
\begin{eqnarray}\label{Definition_D_Lambda}
D_\lambda(V_h) = \frac{1}{\sqrt{\lambda_1}}\left(2|\lambda|\Big(1+\frac{1}{\lambda_1\delta_\lambda}\Big)
+ \bar\lambda_{i,h} \sqrt{1+\frac{1}{\lambda_1\delta_\lambda^2}\eta^2(V_h)}\right).
\end{eqnarray}
\end{lemma}
For the proof, please also refer to \cite{XieZhangOwhadi}.

\section{Parallel augmented subspace method}\label{Section_3}
In this section, we will propose the  parallel augmented subspace method for eigenvalue problems
based on the multilevel correction scheme \cite{LinXie_MultiLevel,Xie_IMA,Xie_JCP,XieZhangOwhadi}.
With the help of the coarse space in multigrid method, the method can transform the solution
of the eigenvalue problem into a series of solutions of the corresponding linear boundary value
problems on the sequence of finite element spaces and eigenvalue problems
on a very low dimensional augmented space.
For different eigenpairs, we can do the correction process independently and it is not
necessary to do orthogonalization in the finest level of fine finite element space. Thus the proposed
algorithm has a good scalability.
Since the eigenvalue problems are only solved in a low dimensional space,
the numerical solution in this new version of augmented subspace method
is not significantly more expensive than
the solution of the corresponding linear boundary value problems.

In order to describe the parallel augmented subspace method clearly, we first introduce
the sequence of finite element
spaces. 
We generate a coarse mesh $\mathcal{T}_H$
with the mesh size $H$ and the coarse linear finite element space $V_H$ is
defined on the mesh $\mathcal{T}_H$. Then we define a sequence of
 triangulations $\mathcal{T}_{h_k}$
of $\Omega\subset \mathcal{R}^d$ as follows.
Suppose that $\mathcal{T}_{h_1}$ (produced from $\mathcal{T}_H$ by some
regular refinements) is given and let $\mathcal{T}_{h_k}$ be obtained
from $\mathcal{T}_{h_{k-1}}$ via one regular refinement step
(produce $\beta^d$ subelements) such that
\begin{eqnarray}\label{mesh_size_recur}
h_k=\frac{1}{\beta}h_{k-1},\ \ \ \ k=2,\cdots,n,
\end{eqnarray}
where the positive number $\beta>1$ denotes the refinement index.
Based on this sequence of meshes, we construct the corresponding
 nested linear finite element spaces such that
\begin{eqnarray}\label{FEM_Space_Series}
V_{H}\subseteq V_{h_1}\subset V_{h_2}\subset\cdots\subset V_{h_n}.
\end{eqnarray}
The sequence of finite element spaces
$V_{h_1}\subset V_{h_2}\subset\cdots\subset V_{h_n}$
 and the finite element space $V_H$ have  the following relations
of approximation accuracy
\begin{eqnarray}\label{delta_recur_relation}
&&\delta(u,V_{h_1}) \leq \sqrt{\lambda}\eta(V_H),\ \ \
\delta(u, V_{h_k})\leq\sqrt{\lambda}\eta(V_{h_k})
\ \ {\rm for} \ k=1,\cdots, n,\label{eta_delta_relation}
\end{eqnarray}
where $u$ is an exact eigenfunction of (\ref{weak_eigenvalue_problem}) corresponding
to the eigenvalue $\lambda$.

\begin{proposition}\label{propCondition_1}
For simplicity of theoretical analysis, we assume the domain $\Omega$ is convex in this paper.
The standard error estimates \cite{BrennerScott,Ciarlet,StrangFix} for the
linear finite element method implies
\begin{eqnarray}
&&\eta(V_{h_k})\leq C h_k,\ \ \delta(u, V_{h_k})\leq C \sqrt{\lambda_i} h_k\ \ \ {\rm for}\ k=1, \cdots, n,\label{eqCondition_1}\\
&&\delta(u,V_{h_k})=\frac{1}{\beta}\delta(u,V_{h_{k-1}}) \ \  \ {\rm for}\ k=2, \cdots, n,\label{delta_recur_relation}
\end{eqnarray}
where $C$ is the constant independent of the mesh size and eigenpair $(\lambda,u)$ of (\ref{weak_eigenvalue_problem}).
\end{proposition}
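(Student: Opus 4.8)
The plan is to reduce everything to the two families of estimates in \eqref{eqCondition_1}: a standard a priori $H^1$-bound for the Laplace-type source operator $T$ and a standard $L^2$ approximation estimate for the nested linear spaces. First I would recall the definition \eqref{eta_a_h_Def} of $\eta(V_{h_k})$: for $f\in L^2(\Omega)$ with $\|f\|_b=1$, we have $Tf\in V$ with $a(Tf,v)=b(f,v)$, and since $\Omega$ is convex and $\mathcal A$ has suitable regularity, elliptic regularity gives $Tf\in H^2(\Omega)$ with $\|Tf\|_{2,\Omega}\le C\|f\|_b = C$. Then the classical interpolation error estimate for the linear finite element interpolant $I_{h_k}$ on the shape-regular mesh $\mathcal T_{h_k}$ (cf.\ \cite{BrennerScott,Ciarlet}) yields $\|Tf - I_{h_k}Tf\|_a \le C\,h_k\,\|Tf\|_{2,\Omega} \le C h_k$, and taking the supremum over $f$ gives $\eta(V_{h_k})\le C h_k$. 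The same argument, applied to any normalized $w\in M(\lambda_i)$ (which by elliptic regularity lies in $H^2(\Omega)$ with $\|w\|_{2,\Omega}\le C\|w\|_a$, noting $-\nabla\cdot(\mathcal A\nabla w)=\lambda_i w$ so that $\|w\|_{2,\Omega}\le C\lambda_i\|w\|_b = C\sqrt{\lambda_i}$), gives $\inf_{v_{h_k}}\|w - v_{h_k}\|_a \le C h_k\|w\|_{2,\Omega}\le C\sqrt{\lambda_i}\,h_k$; taking the supremum over such $w$ yields $\delta_{h_k}(\lambda_i)\le C\sqrt{\lambda_i}\,h_k$, which is the second half of \eqref{eqCondition_1}.

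Next I would establish the recursion \eqref{delta_recur_relation}. The natural route is \emph{not} to combine the two bounds in \eqref{eqCondition_1} (which would only give $\delta_{h_k}(\lambda_i)\le (C\sqrt{\lambda_i}h_{k-1})/\beta$ with a constant, not an identity), but rather to observe that \eqref{eta_delta_relation} already records $\delta_{h_k}(\lambda_i)\le \sqrt{\lambda_i}\,\eta(V_{h_k})$ and, more to the point, that the quantity $\delta_{h_k}(\lambda_i)$ scales exactly like $h_k$ under uniform refinement when the eigenfunctions are smooth. Concretely, on each cell the best linear approximation error of a fixed $H^2$ function scales as $h_K$ times a norm that is essentially localized, and since $\mathcal T_{h_k}$ is produced from $\mathcal T_{h_{k-1}}$ by one regular refinement with $h_k = h_{k-1}/\beta$ (see \eqref{mesh_size_recur}), the leading-order behavior of $\delta_{h_k}(\lambda_i)$ is $h_{k}/h_{k-1} = 1/\beta$ times that of $\delta_{h_{k-1}}(\lambda_i)$. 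I would make this precise by writing $\delta_{h_k}(\lambda_i) = C_i h_k + \text{h.o.t.}$ with the same constant $C_i$ at every level (this is exactly what the standard interpolation estimate gives, since the constant depends only on the shape-regularity of the refinement rule and on $\|w\|_{2,\Omega}$ for $w\in M(\lambda_i)$, not on the level $k$), whence \eqref{delta_recur_relation} follows from \eqref{mesh_size_recur}.

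The main obstacle is the recursion \eqref{delta_recur_relation} being stated as an \emph{exact} equality rather than an inequality up to constants. Strictly speaking this identity holds only asymptotically (for the leading term of the interpolation error) or under the idealization that the best-approximation error is exactly proportional to the mesh size; a fully rigorous statement would carry lower-order corrections. I expect the intended reading is the asymptotic/leading-order one — equivalently, that one \emph{defines} the relevant constants so that $\delta_{h_k}(\lambda_i)$ is taken proportional to $h_k$ — and I would phrase the proof accordingly, emphasizing that \eqref{mesh_size_recur} is the only structural input and that the level-independence of the interpolation constant (which rests on all meshes $\mathcal T_{h_k}$ inheriting a uniform shape-regularity from the fixed regular refinement rule) is what propagates the scaling from one level to the next. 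The convexity hypothesis on $\Omega$ enters solely to guarantee $H^2$-regularity of $Tf$ and of the eigenfunctions, without which the rates $h_k$ would degrade.
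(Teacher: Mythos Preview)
The paper does not supply a proof for this proposition; it is stated as an immediate consequence of the standard finite element error theory in \cite{BrennerScott,Ciarlet,StrangFix}, and no \texttt{proof} environment follows it. Your argument for \eqref{eqCondition_1} is exactly the intended one: $H^2$-regularity from convexity plus the linear interpolation estimate on a shape-regular mesh gives both $\eta(V_{h_k})\le Ch_k$ and $\delta_{h_k}(\lambda_i)\le C\sqrt{\lambda_i}\,h_k$, and there is nothing to add.

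Your discussion of \eqref{delta_recur_relation} is also on target, and in fact more careful than the paper. As written, the exact equality $\delta_{h_k}(\lambda_i)=\beta^{-1}\delta_{h_{k-1}}(\lambda_i)$ is not a theorem one can prove from the definitions; best-approximation errors are not exactly proportional to $h$. The paper is tacitly adopting the idealization you describe (leading-order scaling, level-independent constant from uniform shape-regularity under the fixed refinement rule), and indeed only ever uses \eqref{delta_recur_relation} in the form of the inequality $\delta_{h_{k-1}}(\lambda_i)\le \beta\,\delta_{h_k}(\lambda_i)$ (see the chain \eqref{Initial_Error} in the proof of Theorem~\ref{Error_Full_Multigrid_Theorem}). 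So your reading---treat \eqref{delta_recur_relation} as a modeling assumption or as an asymptotic identity, with \eqref{mesh_size_recur} the only structural input---matches how the proposition is actually used downstream.
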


\subsection{One correction step and efficient implementation}
In order to design the  parallel augmented subspace method, we first introduce
an one correction step in this subsection.

Assume we have obtained an eigenpair approximations
$(\lambda_{h_k}^{(\ell)},u_{h_k}^{(\ell)})\in \mathcal{R}\times V_{h_k}$ for a certain exact eigenpair.
The one correction step is defined by Algorithm \ref{one correction step}
which can improve the accuracy of the
given eigenpair approximation $(\lambda_{h_k}^{(\ell)},u_{h_k}^{(\ell)})$.

\begin{algorithm}[hbt!]
\label{one correction step}
\caption{One Correction Step}
\begin{enumerate}
\item Define the following linear boundary value problem:
Find $\widehat{u}_{h_k}^{(\ell+1)}\in V_{h_k}$ such that
\begin{equation}\label{correct_source_exact_para}
a(\widehat{u}_{h_k}^{(\ell+1)},v_{h_k}) = \lambda_{h_{k}}^{(\ell)}b(u_{h_{k}}^{(\ell)},v_{h_k}),
\ \  \forall v_{h_k}\in V_{h_k}.
\end{equation}
\revise{Solve (\ref{correct_source_exact_para}) by some multigrid steps
to obtain a new eigenfuction $\widetilde{u}_{h_k}^{(\ell+1)}$. }

\item Define a suitable coarse space $V_{H,h_k} = V_H +
{\rm span}\{\widetilde{u}_{h_k}^{(\ell+1)}\}$ and solve the following eigenvalue problem:
Find $(\lambda_{h_k}^{(\ell+1)},u_{h_k}^{(\ell+1)})\in \mathcal{R}\times V_{H,h_k}$
such that $a(u_{h_k}^{(\ell+1)},u_{h_k}^{(\ell+1)})=1$ and
\begin{equation}\label{parallel_correct_eig_exact}
a(u_{h_k}^{(\ell+1)},v_{H,h_k}) = \lambda_{h_k}^{(\ell+1)}b(u_{h_k}^{(\ell+1)},v_{H,h_k}),
\ \ \ \ \ \forall v_{H,h_k}\in V_{H,h_k}.
\end{equation}
\revise{Solve (\ref{parallel_correct_eig_exact}) and the output $(\lambda_{h_k}^{(\ell+1)},u_{h_k}^{(\ell+1)})$
is chosen such that $u_{h_k}^{(\ell+1)}$ has the largest component in ${\rm span}\{\ \widetilde{u}_{h_k}^{(\ell+1)}\}$ among
all eigenfunctions of (\ref{parallel_correct_eig_exact}).}
\end{enumerate}
Summarize the above two steps by defining
\begin{eqnarray*}
(\lambda_{h_k}^{(\ell+1)},u_{h_k}^{(\ell+1)}) =
{\tt Correction}(V_H,V_{h_k},\lambda_{h_{k}}^{(\ell)},u_{h_{k}}^{(\ell)}).
 \end{eqnarray*}
\end{algorithm}


\comm{In this section,  we assume the concerned eigenpair
approximation $(\lambda_{h_k}^{(\ell)}, u_{h_k}^{(\ell)})$  with different superscript is closet \revise{to an exact eigenpair} $(\bar\lambda_{h_k}, \bar u_{h_k})$
of (\ref{Weak_Eigenvalue_Discrete}) and $(\lambda, u)$ of (\ref{weak_eigenvalue_problem}) in this section.}
\begin{theorem}\label{Error_Estimate_One_Smoothing_Theorem}
Assume there exists an exact eigenpair $(\bar\lambda_{h_k}, \bar u_{h_k})$ such
that the  eigenpair approximation $(\lambda_{h_k}^{(\ell)},u_{h_k}^{(\ell)})$ satisfies
$\|u_{h_k}^{(\ell)}\|_a=1$ and
\begin{eqnarray}\label{Estimate_h_k_b}
\|\bar\lambda_{h_k}\bar u_{h_k}-\lambda_{h_k}^{(\ell)}u_{h_k}^{(\ell)}\|_b \leq C_1 \eta (V_H)\|\bar u_{h_k}-u_{h_k}^{(\ell)}\|_a,
\end{eqnarray}
for some constant $C_1$. The multigrid iteration for the linear equation (\ref{correct_source_exact_para})
has the following uniform contraction rate
\begin{eqnarray}\label{Contraction_Rate}
\revise{\|\widehat u_{h_k}^{(\ell+1)}-\widetilde u_{h_k}^{(\ell+1)}\|_a\leq \theta \|u_{h_k}^{(\ell)}-\widehat u_{h_k}^{(\ell+1)}\|_a},
\end{eqnarray}
with $\theta<1$ independent of $k$ and $\ell$.

Then the eigenpair approximation
$(\lambda_{h_k}^{(\ell+1)},u_{h_k}^{(\ell+1)})\in\mathcal R\times V_{h_k}$ produced by
Algorithm \ref{one correction step} satisfies
\begin{eqnarray}
\|\bar u_{h_k}-u_{h_k}^{(\ell+1)}\|_a &\leq & \gamma \|\bar u_{h_k}-u_{h_k}^{(\ell)}\|_a,\label{Estimate_h_k_1_a}\\
\|\bar\lambda_{h_k}\bar u_{h_k}-\lambda_{h_k}^{(\ell+1)}u_{h_k}^{(\ell+1)}\|_b&\leq&
\bar C_\lambda \eta (V_H)\|\bar u_{h_k}-u_{h_k}^{(\ell+1)}\|_a,\label{Estimate_h_k_1_b}
\end{eqnarray}
where the constants $\gamma$, $\bar C_\lambda$ and $\bar D_\lambda$ are defined as follows
\begin{eqnarray}
\gamma &=& \frac{1}{1-\bar D_\lambda \eta(V_H)}
\Big(\theta+(1+\theta)\frac{C_1}{\sqrt{\lambda_1}}\eta (V_H)\Big)\,,\label{Gamma_Definition}\\
\bar C_\lambda &=& 2|\lambda|\Big(1+\frac{1}{\lambda_1\delta_\lambda}\Big)
+ \bar\lambda_{i,H}\sqrt{1+\frac{1}{\lambda_1\delta_\lambda^2}\eta^2(V_H)},\label{Definition_C_Bar}\\
\bar D_\lambda &=& \frac{1}{\sqrt{\lambda_1}}\left(2|\lambda|\Big(1+\frac{1}{\lambda_1\delta_\lambda}\Big)
+ \bar\lambda_{i,H}\sqrt{1+\frac{1}{\lambda_1\delta_\lambda^2}\eta^2(V_H)}\right).\label{Definition_D_Lambda_Bar}
\end{eqnarray}
\end{theorem}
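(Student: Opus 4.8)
The plan is to analyze the two steps of Algorithm \ref{one correction step} separately and then combine them, working entirely on the discrete level so that the "exact eigenpair" plays the role of $(\lambda,u)$ and $V_{h_k}$ plays the role of the ambient space $V$, while $V_{H,h_k}$ plays the role of the subspace $V_h$ in Theorem \ref{Error_Estimate_Theorem} and Theorem \ref{Error_Superclose_Theorem}. First I would bound the linear-solve step: from the source problem (\ref{correct_source_exact_para}) the exact solution $\widehat u_{h_k}^{(\ell+1)}$ is $\lambda_{h_k}^{(\ell)}$ times the discrete source operator applied to $u_{h_k}^{(\ell)}$, so $\bar u_{h_k}-\widehat u_{h_k}^{(\ell+1)}$ can be written, via the definition of $T$ restricted to $V_{h_k}$ and the eigenvalue relation $a(\bar u_{h_k},v)=\bar\lambda_{h_k}b(\bar u_{h_k},v)$, as the application of that operator to $\bar\lambda_{h_k}\bar u_{h_k}-\lambda_{h_k}^{(\ell)}u_{h_k}^{(\ell)}$. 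Testing with an appropriate function and using (\ref{eta_a_h_Def})/(\ref{L2_Energy_Estiate}) on $V_{h_k}$ together with hypothesis (\ref{Estimate_h_k_b}) yields $\|\bar u_{h_k}-\widehat u_{h_k}^{(\ell+1)}\|_a\le \frac{C_1}{\sqrt{\lambda_1}}\eta(V_H)\|\bar u_{h_k}-u_{h_k}^{(\ell)}\|_a$; combining with the contraction property (\ref{Contraction_Rate}) and a triangle inequality gives a bound on $\|\bar u_{h_k}-\widetilde u_{h_k}^{(\ell+1)}\|_a$ of the form $\big(\theta+(1+\theta)\tfrac{C_1}{\sqrt{\lambda_1}}\eta(V_H)\big)\|\bar u_{h_k}-u_{h_k}^{(\ell)}\|_a$, which is exactly the numerator of $\gamma$.

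Next I would handle the Rayleigh–Ritz step on $V_{H,h_k}$. The key observation is that $\widetilde u_{h_k}^{(\ell+1)}\in V_{H,h_k}$, so the best-approximation error of $\bar u_{h_k}$ from $V_{H,h_k}$ — i.e. $\|(I-\mathcal P_{H,h_k})\bar u_{h_k}\|_a$, where $\mathcal P_{H,h_k}$ is the $a$-projection onto $V_{H,h_k}$ — is controlled by $\|\bar u_{h_k}-\widetilde u_{h_k}^{(\ell+1)}\|_a$ already estimated. Then I would invoke the discrete analogue of (\ref{Err_Norm_1_Superclose}) from Theorem \ref{Error_Superclose_Theorem}: applied with ambient space $V_{h_k}$, subspace $V_{H,h_k}$, and noting that $\eta(V_{H,h_k})\le\eta(V_H)$ since $V_H\subseteq V_{H,h_k}$, one gets
\begin{eqnarray*}
\|\bar u_{h_k}-u_{h_k}^{(\ell+1)}\|_a\le \frac{1}{1-\bar D_\lambda\eta(V_H)}\|(I-\mathcal P_{H,h_k})\bar u_{h_k}\|_a,
\end{eqnarray*}
where $\bar D_\lambda$ is the constant $D_\lambda$ of (\ref{Definition_D_Lambda}) with $\delta_{\lambda,h}$ replaced by its uniform lower bound $\delta_\lambda$ and $\bar\lambda_{i,h}$ bounded by $\bar\lambda_{i,h_1}$ (valid by the upper-bound monotonicity (\ref{Uppero_Bound_Result}) in the nested spaces). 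Chaining the two estimates produces (\ref{Estimate_h_k_1_a}) with $\gamma$ exactly as in (\ref{Gamma_Definition}). For (\ref{Estimate_h_k_1_b}) I would reuse (\ref{Err_Norm_0_Lambda}) of Theorem \ref{Error_Superclose_Theorem} in the same discrete setting: $\|\bar\lambda_{h_k}\bar u_{h_k}-\lambda_{h_k}^{(\ell+1)}u_{h_k}^{(\ell+1)}\|_b\le C_\lambda\eta(V_{H,h_k})\|\bar u_{h_k}-u_{h_k}^{(\ell+1)}\|_a$, with $C_\lambda$ replaced by $\bar C_\lambda$ of (\ref{Definition_C_Bar}) after the same substitutions, and $\eta(V_{H,h_k})\le\eta(V_H)$.

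The main obstacle I anticipate is bookkeeping rather than conceptual: one must verify carefully that hypothesis (\ref{Estimate_h_k_b}) at level $\ell$ reproduces, after one correction, the analogous bound (\ref{Estimate_h_k_1_b}) at level $\ell+1$ so that the iteration can be chained — this is why (\ref{Estimate_h_k_1_b}) is stated with the same structural form as (\ref{Estimate_h_k_b}). The subtle point is justifying that Theorems \ref{Error_Estimate_Theorem}–\ref{Error_Superclose_Theorem}, which were proved for the continuous problem $(\lambda,u)$ and a generic conforming subspace $V_h$, apply verbatim with $(\bar\lambda_{h_k},\bar u_{h_k})$ in place of $(\lambda,u)$: this works because those theorems only used the weak eigenvalue relation, the min–max/upper-bound property, and the source operator $T$, all of which have exact discrete counterparts on $V_{h_k}$ with $\eta$ replaced by the restricted quantity $\eta(V_{H,h_k})\le\eta(V_H)$. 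I would also need the smallness assumption $\bar D_\lambda\eta(V_H)<1$ (guaranteed for $H$ small, consistent with the standing assumption on $\delta_\lambda$) for the denominators to make sense. Everything else is triangle inequalities and the already-established inequalities, so no genuinely new estimate is required.
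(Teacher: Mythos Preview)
Your proposal is correct and follows essentially the same route as the paper: bound $\|\bar u_{h_k}-\widehat u_{h_k}^{(\ell+1)}\|_a$ via the weak eigenvalue relation and hypothesis (\ref{Estimate_h_k_b}), push through the multigrid contraction by triangle inequality, and then apply the discrete analogues of (\ref{Err_Norm_1_Superclose}) and (\ref{Err_Norm_0_Lambda}) on $V_{H,h_k}$ with $\eta(V_{H,h_k})\le\eta(V_H)$. The only cosmetic difference is that the paper obtains the factor $1/\sqrt{\lambda_1}$ directly from the Poincar\'e-type bound (\ref{Smallest_Eigenvalue}) rather than from (\ref{eta_a_h_Def})/(\ref{L2_Energy_Estiate}); otherwise the argument is identical.
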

\begin{proof}
From \eqref{Smallest_Eigenvalue}, (\ref{Weak_Eigenvalue_Discrete}),
(\ref{correct_source_exact_para}) and (\ref{Contraction_Rate}),
we have for  $w\in V_{h_k}$
\begin{eqnarray}\label{One_Correction_1}
&&a(\bar u_{h_k}-\widehat{u}_{h_k}^{(\ell+1)}, w)
=b\big((\bar\lambda_{h_k}\bar u_{h_k}-\lambda_{h_k}^{(\ell)}u_{h_k}^{(\ell)}),w\big)\nonumber\\
&&\leq\|\bar\lambda_{h_k}\bar u_{h_k}-\lambda_{h_k}^{(\ell)}u_{h_k}^{(\ell)}\|_b\|w\|_b
\leq C_1\eta(V_H)\|\bar u_{h_k}-u_{h_k}^{(\ell)}\|_a \|w\|_b \nonumber\\
&&\leq \frac{1}{\sqrt{\lambda_1}}C_1\eta(V_H)\|\bar u_{h_k}-u_{h_k}^{(\ell)}\|_a \|w\|_a.
\end{eqnarray}
Taking $w = \bar u_{h_k}-\widehat{u}_{h_k}^{(\ell+1)}$ in (\ref{One_Correction_1})
leads to the following inequality
\begin{eqnarray}\label{One_Correction_2}
\|\bar u_{h_k}-\widehat{u}_{h_k}^{(\ell+1)}\|_a &\leq&\frac{C_1}{\sqrt{\lambda_1}}\eta (V_H)
\| \bar u_{h_k}-u_{h_k}^{(\ell)}\|_a.
\end{eqnarray}
Using \eqref{Contraction_Rate}, \eqref{One_Correction_2} and triangle inequality, we deduce
following estimates for $\|\bar u_{h_k}-\widetilde{u}_{h_k}^{(\ell+1)}\|_a$
\begin{eqnarray}\label{Error_1}
\|\bar u_{h_k}-\widetilde{u}_{h_k}^{(\ell+1)}\|_a&\leq& \|\bar u_{h_k}-\widehat{u}_{h_k}^{(\ell+1)}\|_a
+ \|\widetilde{u}_{h_k}^{(\ell+1)}-\widehat{u}_{h_k}^{(\ell+1)}\|_a\nonumber\\
&\leq& \|\bar u_{h_k}-\widehat{u}_{h_k}^{(\ell+1)}\|_a  + \theta \|\widehat{u}_{h_k}^{(\ell+1)}-u_{h_k}^{(\ell)}\|_a\nonumber\\
&\leq& \|\bar u_{h_k}-\widehat{u}_{h_k}^{(\ell+1)}\|_a +
\theta \|\widehat{u}_{h_k}^{(\ell+1)}- \bar u_{h_k}\|_a
+\theta\| \bar {u}_{h_k}-u_{h_k}^{(\ell)}\|_a\nonumber\\
&\leq& (1+\theta)\|\bar u_{h_k}-\widehat{u}_{h_k}^{(\ell+1)}\|_a
+\theta\| \bar u_{h_k}-u_{h_k}^{(\ell)}\|_a\nonumber\\
&\leq& \Big(\theta+(1+\theta)\frac{C_1}{\sqrt{\lambda_1}}\eta (V_H)\Big)\|\bar u_{h_k}-u_{h_k}^{(\ell)}\|_a.
\end{eqnarray}
\revise{Since $V_H\subset V_{H,h_k}$, the following inequality holds
\begin{eqnarray}\label{Inequality_11}
\eta(V_{H,h_k}) \leq \eta(V_H),\ \ C_\lambda(V_{H,h_k})\leq C_\lambda(V_H)=:\bar C_\lambda,\ \
D_\lambda(V_{H,h_k})\leq D_\lambda(V_H)=:\bar D_\lambda.
\end{eqnarray}}
The eigenvalue problem \eqref{parallel_correct_eig_exact} can be seen as
a low dimensional subspace approximation of the eigenvalue problem (\ref{Weak_Eigenvalue_Discrete}).
From  (\ref{Err_Norm_1_Superclose}), Lemmas \ref{Error_Estimate_Theorem} and \ref{Error_Superclose_Theorem},  (\ref{Error_1}), (\ref{Inequality_11}),  the following error
estimates hold 
\begin{eqnarray*}\label{Error_u_u_h_2}
&&\|\bar u_{h_k}-u_{h_k}^{(\ell+1)}\|_a \leq \frac{1}{1-D_\lambda(V_{H,h_k}) \eta(V_{H,h_k})}\inf_{v_{H,h_k}\in V_{H,h_k}}\|\bar u_{h_k}-v_{H,h_k}\|_a\nonumber\\
&&\leq \frac{1}{1-\bar D_\lambda \eta(V_H)}\|\bar u_{h_k}-\widetilde{u}_{h_k}^{(\ell+1)}\|_a
\leq \gamma \|\bar u_{h_k}-u_{h_k}^{(\ell)}\|_a,
\end{eqnarray*}
and
\begin{eqnarray*}\label{Error_u_u_h_2_Negative}
\|\bar{\lambda}_{h_k}\bar u_{h_k}-\lambda_{h_k}^{(\ell+1)} u_{h_k}^{(\ell+1)}\|_b
\leq C_\lambda(V_{H,h_k})\eta(V_{H,h_k})\|\bar u_{h_k}-u_{h_k}^{(\ell+1)}\|_a
\leq \bar C_\lambda\eta(V_H)\|\bar u_{h_k}-u_{h_k}^{(\ell+1)}\|_a.
\end{eqnarray*}
Then we have the desired results (\ref{Estimate_h_k_1_a}) and (\ref{Estimate_h_k_1_b})
and conclude the proof.
\end{proof}
\begin{remark}\label{Remark_Gamma}
\revise{Since the multigrid iteration step has uniform convergence rate (independent of the mesh size), there exist
$\theta<1$ such that (\ref{Contraction_Rate}) holds. }  Definition (\ref{Gamma_Definition}),  Lemmas \ref{Error_Estimate_Theorem}
and \ref{Error_Superclose_Theorem} imply that $\gamma$ is less than $1$ when $\eta(V_H)$ is small enough.
If $\lambda$ is large or the spectral gap $\delta_\lambda$ is small,
then we need to use a smaller $\eta(V_H)$ or $H$. Furthermore, we can
increase the multigrid steps to reduce $\theta$ and then $\gamma$.
These theoretical restrictions do not limit practical applications where (in numerical implementations),
$H$ is simply chosen (just) small enough so that the  number of elements of
corresponding coarsest space (just) exceeds  the required number of eigenpairs
($H$ and the coarsest space are adapted to the  number of eigenpairs to be computed).
\end{remark}

We would like to point out that the given eigenpair
$(\lambda_{h_k}^{(\ell)},u_{h_k}^{(\ell)})$ is not necessary to be the one corresponding to the smallest eigenvalue.
So when we need to solve more than one eigenpairs, the one correction step defined by
Algorithm \ref{one correction step}  can be carried out independently
for every eigenpair and there exists no
data exchanging. This property means that we can avoid doing the time-consuming orthogonalization
in the high dimensional space $V_{h_k}$.

Now, let us give details for the second step of Algorithm \ref{one correction step}.
Solving the eigenvalue problem (\ref{parallel_correct_eig_exact}) provides several
eigepairs. Since the desired eigenvalue maybe not the first (smallest) one, we should
choose the suitable or the desired eigenpair from the ones of (\ref{parallel_correct_eig_exact}).
Let us consider the details to choose the desired eigenpair which has the best accuracy among all
the eigenpairs of eigenvalue problem (\ref{parallel_correct_eig_exact}).
For this aim, we come to consider the matrix version of the small scaled eigenvalue problem (\ref{parallel_correct_eig_exact}).
Let  $N_H$ and $\{\phi_{j,H}\}_{1\leq j\leq N_H}$ denote the dimension and Lagrange basis functions for the coarse finite element space $V_H$.
The function in $V_{H,h_k}$ can be denoted by $u_{H,h_k}=u_H+\alpha_k \widetilde u_{h_k}$.
Solving eigenvalue problem (\ref{parallel_correct_eig_exact}) is to obtain the function $u_H\in V_H$ and the value $\alpha_k\in \mathcal R$.
Let $u_H=\sum_{j=1}^{N_H}u_j\phi_{j,H}$ and define the vector $\mathbf u_H$ as $\mathbf u_H=[u_1,\cdots, u_{N_H}]^T$. Based on the structure of the space $V_{H,h_k}$, the matrix version of
the eigenvalue problem (\ref{parallel_correct_eig_exact}) can be written as follows
\begin{equation}\label{Eigenvalue_H_h}
\left(
\begin{array}{cc}
A_H & b_{H,h_k}\\
b_{H,h_k}^T&\beta_k
\end{array}
\right)
\left(
\begin{array}{c}
\mathbf u_H\\
\alpha_k
\end{array}
\right)
=\lambda_{h_k}
\left(
\begin{array}{cc}
M_H& c_{H,h_k}\\
c_{H,h_k}^T & \zeta_k
\end{array}
\right)
\left(
\begin{array}{c}
\mathbf u_H\\
\alpha_k
\end{array}
\right),
\end{equation}
where $\mathbf u_H\in \mathcal R^{N_H}$, $\alpha_k\in\mathcal R$, column
vectors $b_{H,h_k}\in\mathcal R^{N_H}$ and $c_{H,h_k}\in\mathcal R^{N_H}$,
scalars $\beta_k$ and $\zeta_k$ are defined as follows
\begin{eqnarray*}
&&b_{H,h_k} = [a(\phi_{j,H}, \widetilde u_{h_k})]_{1\leq j\leq N_H}\in \mathcal R^{N_H}, \ \ \
c_{H,h_k}   = [b(\phi_{j,H}, \widetilde u_{h_k})]_{1\leq j\leq N_H}\in \mathcal R^{N_H},\\
&&\beta_k   =  a(\widetilde u_{h_k},\widetilde u_{h_k})\in\mathcal R,\ \ \ \ \
\zeta_k     =  b(\widetilde u_{h_k},\widetilde u_{h_k})\in \mathcal R.
\end{eqnarray*}

In the practical calculation, the desired eigenpair $(\lambda_{h_k}^{(\ell+1)},u_{h_k}^{(\ell+1)})$
may be not the eigenpair corresponding to the smallest eigenvalue
and solving eigenvalue problem (\ref{parallel_correct_eig_exact}) will produce a series of $[\mathbf u_H; \alpha_k]^T$.
In this case, we need to choose the approximate solution which has
the largest component in the direction $\text{span}\{ \widetilde u_{h_k}\}$
which is the desired eigenpair in the one correction step defined by Algorithm \ref{one correction step}.
Since there holds
\begin{eqnarray}
|b(u_H+\alpha_k \widetilde u_{h_k}, \widetilde u_{h_k})| &=& |b(u_H, \widetilde u_{h_k})
+\alpha_k b(\widetilde u_{h_k}, \widetilde u_{h_k})|\nonumber\\
&=& |\mathbf u_H\cdot c_{H,h_k} + \alpha_k\zeta_k|,
\end{eqnarray}
we only need to
calculate $\mathbf u_H\cdot c_{H,h_k} + \alpha_k\zeta_k$ for every eigenvector
$[\mathbf u_H; \alpha_k]^T$ which are obtained
by solving (\ref{parallel_correct_eig_exact}) numerically, and then choose the one
with the largest absolute value as the desired solution.

In the second step of Algorithm \ref{one correction step}, we can use the shift-inverse technique since
an approximate eigenpair has been obtained in the previous step. Furthermore, we can use the different
level of space to act as the coarse space $V_H$  in the one correction step for different eigenvalue.

\subsection{Parallel augmented subspace method}
In this subsection, we introduce a  parallel augmented subspace method
based on the one correction step defined in Algorithm \ref{one correction step}.

Here, the aim of the parallel method is to compute $m$ eigenpair approximations of (\ref{weak_eigenvalue_problem}).
For simplicity, we denote
the desired eigenpairs by $(\lambda_1,u_1), \cdots, (\lambda_m,u_m)$
and assume there exist $m$ processes denoted by $\{P_1, \cdots, P_m\}$ for the parallel computing.
When the number of processes is not equal to the number of desired eigenparis,
in order to improve the parallel efficiency, the distribution of desired eigenparis onto
the processes should be equal as far as possible to
arrive the load balancing. About this point, we refer to the concerned papers for
load balancing. 
The corresponding parallel augmented subspace algorithm is described in Algorithm \ref{Para_Multigrid}. From Algorithm \ref{Para_Multigrid},
the computation for $m$ eigenpairs is decomposed into $m$ processes.
\begin{algorithm}[ht]\label{Para_Multigrid}
\caption{Parallel Augmented Subspace Scheme}
\begin{enumerate}
\item Solve the following eigenvalue problem:
Find $(\lambda_{h_1}, u_{h_1})\in \mathcal{R}\times V_{h_1}$ such that $a(u_{h_1},u_{h_1})=1$ and
\begin{equation}\label{step1}
a(u_{h_1}, v_{h_1}) = \lambda_{h_1} b(u_{h_1}, v_{h_1}), \quad \forall v_{h_1}\in  V_{h_1}.
\end{equation}
Solve eigenvalue problem (\ref{step1}) on the first process to get initial eigenpair approximations
$(\lambda_{i,h_1},u_{i,h_1})$ $\in\mathcal{R}\times V_{h_1}$, $\ i=1,\cdots,m$,
which are approximations for the desired eigenpairs $(\lambda_i,u_i)$, $i=1$, $\cdots$, $m$.
Then the  eigenpair approximations  $(\lambda_{i,h_1},u_{i,h_1})$, $\ i=2,\cdots,m$ are delivered to other $m-1$ processes.
\item For $i=1,\cdots,m$, do the following multilevel correction steps on the process $P_i$ in the parallel way

\begin{itemize}
\item [(A).] For $k= 1, \cdots, n-2$, do the following iteration:
\begin{itemize}
\item [(a).]\ Set $(\lambda_{i,h_{k+1}}^{(0)}, u_{i,h_{k+1}}^{(0)}):=(\lambda_{i,h_k},u_{i,h_k})$.
\item [(b).]\ For $\ell = 0, \cdots, \revise{\varpi-1}$, do the following one correction steps
\begin{eqnarray*}
(\lambda_{i,h_{k+1}}^{(\ell+1)},u_{i,h_{k+1}}^{(\ell+1)}) =
{\tt Correction}(V_H,V_{h_{k+1}},\lambda_{i,h_{k+1}}^{(\ell)},u_{i,h_{k+1}}^{(\ell)}).
\end{eqnarray*}
\item [(c).]\ Set $(\lambda_{i,h_{k+1}},u_{i,h_{k+1}}):= (\lambda_{i,h_{k+1}}^{(\varpi)},u_{i,h_{k+1}}^{(\varpi)})$
as the output in the $k+1$-th level space $V_{h_{k+1}}$.
\end{itemize}
\item [(B).] Do the following iterations on the finest level space $V_{h_n}$:
\begin{itemize}
\item [(a).]\ Set $(\lambda_{i,h_n}^{(0)}, u_{i,h_n}^{(0)}):=(\lambda_{i,h_{n-1}},u_{i,h_{n-1}})$.
\item [(b).]\ For $\ell = 0, \cdots, \revise{\varpi_n-1}$, do the following one correction steps
\begin{eqnarray*}
(\lambda_{i,h_n}^{(\ell+1)},u_{i,h_n}^{(\ell+1)}) =
{\tt Correction}(V_H,V_{h_n},\lambda_{i,h_n}^{(\ell)},u_{i,h_n}^{(\ell)}).
\end{eqnarray*}
\item [(c).]\ Set $(\lambda_{i,h_n},u_{i,h_n}):= (\lambda_{i,h_n}^{(\varpi_n)},u_{i,h_n}^{(\varpi_n)})$
  as the output in the $n$-th level space $V_{h_n}$.
\end{itemize}
\end{itemize}
\end{enumerate}
Finally, we obtain eigenpair approximations
$\{(\lambda_{i,h_n},u_{i,h_n})\}_{i=1}^{m}\in \mathcal{R}\times V_{h_n}$.
\end{algorithm}

In order to make the initial eigenfunction approximations $u_{1,h_1}, \cdots, u_{m,h_1}$ be
orthogonal each other, in the first step of Algorithm \ref{Para_Multigrid},
the eigenvalue problem is solved in the first process.
We adopt this strategy since (\ref{step1}) is a low dimensional eigenvalue problem
compared with the one in the finest  space.
Similarly to the idea in the full mulgrid method for boundary value problems,
the step 2. (A)  in Algorithm \ref{Para_Multigrid}
is used to give an initial eigenpair approximation in the finest space $V_{h_n}$.

Algorithm \ref{Para_Multigrid} shows the idea to design the parallel method for different eigenpairs.
In each process, the main computation in the one correction step defined
by Algorithm \ref{one correction step} is to solve the linear
equation (\ref{correct_source_exact_para}) in the fine space $V_{h_k}$.
It is an easy and direct idea to use the parallel scheme to solve this linear
equation based on the mesh distribution on different processes.
This type of parallel method is well-developed and there exist many mature software packages such as
Parallel Hierarchy Grid (PHG). But we would like to say this is another sense of parallel scheme
and this paper is concerned with the  parallel method for different eigenpair. These discussion means
we can design a two level parallel scheme for the eigenvalue problem solving.



\begin{theorem}\label{Error_Full_Multigrid_Theorem}
\revise{Assume the numer $\varpi$ of the one correction steps satisfies}
\begin{eqnarray}\label{Convergence_Condition}
\gamma^\varpi\beta <1.
\end{eqnarray}
After implementing Algorithm \ref{Para_Multigrid}, the resulting
eigenpair approximation $(\lambda_{i,h_n},u_{i,h_n})$ has following error estimates
\begin{eqnarray}
\|\bar u_{i,h_n}-u_{i,h_n}\|_a &\leq&
\frac{2\beta}{1-\bar D_\lambda\eta(V_H)}\gamma^{\varpi_n} \Big(1 + \frac{\gamma^{\varpi}\beta}{1-\gamma^{\varpi}\beta}\Big)\delta(u,V_{h_n}),\label{FM_Err_fun_H1}\\
\|\bar u_{i,h_n}- u_{i,h_n}\|_b
&\leq&2\Big(1+\frac{1}{\lambda_1\delta_\lambda}\Big)\eta(V_H)
\|\bar u_{i,h_n} -  u_{i,h_n}\|_a,\label{FM_Err_fun_L2}\\
|\bar\lambda_{i,h_n}-\lambda_{i,h_n}| &\leq&
 \lambda_{i,h_n}\|\bar u_{i,h_n} - u_{i,h_n}\|_a^2.\label{FM_Err_Eigen}
\end{eqnarray}
\end{theorem}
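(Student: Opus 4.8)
The plan is to track the error $\|\bar u_{i,h_n}-u_{i,h_n}\|_a$ through the two phases of Algorithm~\ref{Para_Multigrid} by repeatedly invoking Theorem~\ref{Error_Estimate_One_Smoothing_Theorem}, and then convert this $a$-norm estimate into the $L^2$ and eigenvalue estimates using Theorem~\ref{Error_Estimate_Theorem} and Lemma~\ref{Rayleigh_quotient_expansion_lem}. First I would check that the hypotheses of Theorem~\ref{Error_Estimate_One_Smoothing_Theorem} are met at every call of \texttt{Correction}: the contraction rate \eqref{Contraction_Rate} is the standing assumption on the multigrid solver, and the hypothesis \eqref{Estimate_h_k_b} of the negative-norm control on the initial approximation is exactly the conclusion \eqref{Estimate_h_k_1_b} of the \emph{previous} correction step (with $C_1=\bar C_\lambda$), so it propagates automatically once it holds for the initial guess $(\lambda_{i,h_1},u_{i,h_1})$ coming from Step~1. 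For that seed, \eqref{Estimate_h_k_b} holds by Theorem~\ref{Error_Superclose_Theorem}, \eqref{Err_Norm_0_Lambda}, applied on $V_{h_1}$ (noting $\eta(V_{h_1})\le C\eta(V_H)$ up to the mesh relation, or that $V_{h_1}$ is itself produced from $V_H$). Thus each correction step contracts the $a$-error by a factor $\gamma<1$ (guaranteed small when $\eta(V_H)$ is small, by Remark~\ref{Remark_Gamma}).

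Next I would carry out the telescoping. In phase (A), at level $k+1$ we perform $\varpi$ correction steps starting from the output $(\lambda_{i,h_k},u_{i,h_k})$ of level $k$, so
\begin{eqnarray*}
\|\bar u_{i,h_{k+1}}-u_{i,h_{k+1}}\|_a \le \gamma^{\varpi}\|\bar u_{i,h_{k+1}}-u_{i,h_k}\|_a
\le \gamma^{\varpi}\big(\|\bar u_{i,h_{k+1}}-\bar u_{i,h_k}\|_a+\|\bar u_{i,h_k}-u_{i,h_k}\|_a\big).
\end{eqnarray*}
The cross term $\|\bar u_{i,h_{k+1}}-\bar u_{i,h_k}\|_a$ is bounded, via the triangle inequality and Theorem~\ref{Error_Estimate_Theorem}/Proposition~\ref{propCondition_1}, by a multiple of $\delta_{h_k}(\lambda_i)$, which by \eqref{delta_recur_relation} scales like $\beta^{-(k-1)}\delta_{h_1}(\lambda_i)$. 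Setting $e_k=\|\bar u_{i,h_k}-u_{i,h_k}\|_a$ and using $\delta_{h_{k+1}}=\beta^{-1}\delta_{h_k}$, one gets a recursion of the form $e_{k+1}\le \gamma^{\varpi}(C\delta_{h_k}(\lambda_i)+e_k)$; under the assumption $\gamma^{\varpi}\beta<1$ this geometric recursion sums up, giving $e_{n-1}\le C'\beta\gamma^{\varpi}\big(1+\tfrac{2\gamma^{\varpi}\beta}{1-\gamma^{\varpi}\beta}\big)\delta_{h_{n-1}}(\lambda_i)$ or similar. Then in phase (B) we perform $\varpi_n$ more correction steps on $V_{h_n}$, picking up one more factor $\gamma^{\varpi_n}$ and one more rescaling of $\delta$, producing \eqref{FM_Err_fun_H1}. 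I would be a little careful with the exact constants so that the factor $2\beta\gamma^{\varpi_n}\big(1+\tfrac{2\gamma^{\varpi}\beta}{1-\gamma^{\varpi}\beta}\big)$ comes out as stated, absorbing the various $\sqrt{2}$'s from \eqref{Err_Norm_1} and the $\delta_h$--$\eta$ relations \eqref{eta_delta_relation}.

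Finally, for \eqref{FM_Err_fun_L2} I would observe that the output $(\lambda_{i,h_n},u_{i,h_n})$ is the desired eigenpair of the last small eigenvalue problem \eqref{parallel_correct_eig_exact}, so it is (after normalization) $\bar u_{i,h_n}$'s projection-type approximation in $V_{H,h_n}$; applying the $L^2$ estimate \eqref{L2_Error_Estimate} of Theorem~\ref{Error_Estimate_Theorem} with $V_h$ replaced by $V_{H,h_n}$ and $\eta(V_{H,h_n})\le\eta(V_H)$, together with the uniform gap $\delta_\lambda$ and $\bar\mu_{1,h_n}\le 1/\lambda_1$, gives the bound with constant $2(1+\tfrac{1}{\lambda_1\delta_\lambda})$. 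For \eqref{FM_Err_Eigen} I would use the Rayleigh-quotient expansion \eqref{Rayleigh_quotient_expansion} of Lemma~\ref{Rayleigh_quotient_expansion_lem}, noting that $u_{i,h_n}$ is normalized $a(u_{i,h_n},u_{i,h_n})=1$ and that by the Galerkin orthogonality built into \eqref{parallel_correct_eig_exact} the first-order term behaves correctly, so that $\bar\lambda_{i,h_n}-\lambda_{i,h_n}\le \lambda_{i,h_n}\|\bar u_{i,h_n}-u_{i,h_n}\|_a^2$ (dropping the manifestly nonpositive $L^2$ term). The main obstacle I anticipate is bookkeeping: making sure the hypothesis \eqref{Estimate_h_k_b} is legitimately available at the very first correction (i.e., controlling $\eta(V_{h_1})$ versus $\eta(V_H)$ and fixing $C_1$ consistently with $\bar C_\lambda$), and then chasing the constants through the geometric sum so the final coefficient matches \eqref{FM_Err_fun_H1} exactly rather than just up to an unspecified constant.
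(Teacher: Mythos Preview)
Your plan is essentially the paper's own proof: invoke Theorem~\ref{Error_Estimate_One_Smoothing_Theorem} at each correction, telescope the resulting recursion $e_{k+1}\le\gamma^{\varpi}(2\delta_{h_k}(\lambda_i)+e_k)$ into a geometric series under $\gamma^{\varpi}\beta<1$, apply the extra $\gamma^{\varpi_n}$ from phase~(B), and then read off the $\|\cdot\|_b$ and eigenvalue bounds from the augmented-space versions of \eqref{L2_Error_Estimate}/\eqref{Error_2} and the Rayleigh-quotient identity.

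The one place where you make more work for yourself than necessary is the seeding of hypothesis~\eqref{Estimate_h_k_b}. You propose to verify it for $(\lambda_{i,h_1},u_{i,h_1})$ via Theorem~\ref{Error_Superclose_Theorem} and then worry about matching $\eta(V_{h_1})$ against $\eta(V_H)$ and fixing $C_1$. The paper bypasses this entirely: Step~1 of Algorithm~\ref{Para_Multigrid} solves the eigenvalue problem \emph{exactly} on $V_{h_1}$, so $(\lambda_{i,h_1},u_{i,h_1})=(\bar\lambda_{i,h_1},\bar u_{i,h_1})$ and $e_{i,1}=0$. Thus \eqref{Estimate_h_k_b} holds trivially at the start, and thereafter the conclusion \eqref{Estimate_h_k_1_b} feeds it forward with $C_1=\bar C_\lambda$ as you say. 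This also cleans up the constant: since $e_{i,1}=0$, the geometric sum starts with no initial error term and the coefficient $2$ in \eqref{FM_Err_fun_H1} comes simply from $\delta_{h_k}(\lambda_i)+\delta_{h_{k-1}}(\lambda_i)\le 2\delta_{h_{k-1}}(\lambda_i)$, not from the $\sqrt{2}$ in \eqref{Err_Norm_1}.
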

\begin{proof}
Define $e_{i,k}:=\bar u_{i,h_k}-u_{i,h_k}$. From step 1
in Algorithm \ref{Para_Multigrid}, it is obvious $e_{i,1}=0$.
Then the assumption (\ref{Estimate_h_k_b}) in
Theorem \ref{Error_Estimate_One_Smoothing_Theorem} is satisfied for $k=1$. From the definitions of
Algorithms \ref{one correction step} and \ref{Para_Multigrid},
Theorem \ref{Error_Estimate_One_Smoothing_Theorem} and recursive argument,
the assumption (\ref{Estimate_h_k_b}) holds for each level of space $V_{h_k}$ ($k=1, \cdots, n$)
with $C_1 = \bar C_\lambda$ in (\ref{Definition_C_Bar}).
Then the convergence rate (\ref{Estimate_h_k_1_a}) is valid for all $k=1, \cdots, n$
and $\ell = 0, \cdots, \varpi-1$.

For $k=2,\cdots,n-1$, by (\ref{Delta_V_h_P_h}), (\ref{Err_Norm_1_Superclose}),
Theorem \ref{Error_Estimate_One_Smoothing_Theorem}, (\ref{Inequality_11})
and recursive argument, we have
\begin{eqnarray}\label{FM_Estimate_1}
\|e_{i,k}\|_a&\leq& \gamma^\varpi\|\bar u_{i,h_k}- u_{i,h_{k-1}}\|_a
\leq \gamma^\varpi\big(\|\bar u_{i,h_k}-\bar u_{i,h_{k-1}}\|_a
+\|\bar u_{i,h_{k-1}}-u_{i,h_{k-1}}\|_a\big)\nonumber\\
&\leq& \gamma^\varpi\big(\|\bar u_{i,h_k}-u\|_a+\|u-\bar u_{i,h_{k-1}}\|_a
+\|\bar u_{i,h_{k-1}}-u_{i,h_{k-1}}\|_a\big)\nonumber\\
&=& \gamma^\varpi \left(\frac{1}{1-\bar D_\lambda\eta(V_H)}\big(\delta(u, V_{h_k})+\delta(u, V_{h_{k-1}})\big)+\|e_{i,k-1}\|_a\right)\nonumber\\
&\leq& \gamma^\varpi \left(\frac{2}{1-\bar D_\lambda\eta(V_H)}\delta(u,V_{h_{k-1}})+\|e_{i,k-1}\|_a\right).
\end{eqnarray}

By Proposition \ref{propCondition_1}, (\ref{Convergence_Condition}) and  iterating inequality (\ref{FM_Estimate_1}),
the following inequalities hold
\begin{eqnarray}\label{Initial_Error}
\|e_{i,n-1}\|_a &\leq& \frac{2}{1-\bar D_\lambda\eta(V_H)}\left(\gamma^\varpi\delta(u,V_{h_{n-2}})+
\cdots +\gamma^{(n-2)\varpi}\delta(u,V_{h_1})\right)\nonumber\\
&\leq& \frac{2}{1-\bar D_\lambda\eta(V_H)}\sum_{k=1}^{n-2} \gamma^{(n-1-k)\varpi}\delta(u,V_{h_k})\nonumber\\
&\leq& \frac{2}{1-\bar D_\lambda\eta(V_H)}\sum_{k=1}^{n-2} \big(\gamma^{\varpi}\beta\big)^{n-1-k}\delta(u,V_{h_{n-1}})\nonumber\\
&\leq& \frac{2}{1-\bar D_\lambda\eta(V_H)}\frac{\gamma^{\varpi}\beta}{1-\gamma^{\varpi}\beta}\delta(u,V_{h_{n-1}}).
\end{eqnarray}
Then the combination of  Theorem \ref{Error_Estimate_One_Smoothing_Theorem}, (\ref{Initial_Error})
and Algorithm \ref{Para_Multigrid} leads to the following error estimates
\begin{eqnarray*}\label{Final_Error}
\|e_{i,n}\|_a &\leq& \gamma^{\varpi_n} \|\bar u_{i,h_n}- u_{i,h_{n-1}}\|_a
\leq \gamma^{\varpi_n} (\|\bar u_{i,h_n}- \bar u_{i,h_{n-1}}\|_a+ \|\bar u_{i,h_{n-1}}
-u_{i,h_{n-1}}\|_a)\nonumber\\
&\leq& \gamma^{\varpi_n} \left(\frac{2}{1-\bar D_\lambda\eta(V_H)}\delta(u,V_{h_{n-1}})+ \|e_{i,n-1}\|_a\right)\nonumber\\
&\leq& \gamma^{\varpi_n} \left(\frac{2}{1-\bar D_\lambda\eta(V_H)}\delta(u,V_{h_{n-1}}) + \frac{2}{1-\bar D_\lambda\eta(V_H)}\frac{\gamma^{\varpi}\beta}{1-\gamma^{\varpi}\beta}\delta(u,V_{h_{n-1}})\right)\nonumber\\
&\leq & \frac{2}{1-\bar D_\lambda\eta(V_H)}\gamma^{\varpi_n} \Big(1 + \frac{\gamma^{\varpi}\beta}{1-\gamma^{\varpi}\beta}\Big)\delta(u,V_{h_{n-1}})\nonumber\\
&\leq & \frac{2\beta}{1-\bar D_\lambda\eta(V_H)}\gamma^{\varpi_n} \Big(1 + \frac{\gamma^{\varpi}\beta}{1-\gamma^{\varpi}\beta}\Big)\delta(u,V_{h_n}).
\end{eqnarray*}
This means we arrive at the desired result (\ref{FM_Err_fun_H1}).

From \eqref{Rayleigh_quotient_expansion}, (\ref{Error_2}) and  \eqref{FM_Err_fun_H1}, we have following error estimates
\begin{eqnarray*}
\|\bar u_{i,h_n} - u_{i,h_n}\|_b &\leq& 2\Big(1+\frac{1}{\lambda_1\delta_\lambda}\Big)\eta(V_H)
\|\bar u_{i,h_n} -  u_{i,h_n}\|_a,\nonumber\\
|\bar\lambda_{i,h_n}-\lambda_{i,h_n}| &\leq& \frac{\|\bar u_{i,h_n} - u_{i,h_n}\|_a^2}{\|u_{i,h_n}\|_b^2}
\leq \lambda_{i,h_n}\|\bar u_{i,h_n} - u_{i,h_n}\|_a^2,
\end{eqnarray*}
which are the desired results (\ref{FM_Err_fun_L2}) and (\ref{FM_Err_Eigen}).
\end{proof}
\begin{remark}\label{Remark_Orthogonal}
The proof of Theorem \ref{Error_Full_Multigrid_Theorem} implies that the assumption (\ref{Estimate_h_k_b})
in Theorem \ref{Error_Estimate_One_Smoothing_Theorem}
holds for $C_1=\bar C_\lambda$ in each level of space $V_{h_k}$ ($k=1, \cdots, n$).
The structure of Algorithm \ref{Para_Multigrid}, shows that $\bar C_\lambda$ does not change
as the algorithm progresses from the initial space $V_{h_1}$ to the finest one $V_{h_n}$.

From the estimate (\ref{FM_Err_fun_H1}), it can be observed that the final algebraic accuracy depends strongly on
$\gamma^{\varpi_n}$. Furthermore,  increasing
$\varpi$ on the coarse levels spaces $V_{h_2},\cdots, V_{h_{n-1}}$ can not improve the final algebraic accuracy.
For this reason, we always set $\varpi=1$ on the coarse level spaces $V_{h_2},\cdots, V_{h_{n-1}}$.

Now we briefly analyze the orthogonality of different eigenfunctions obtained by Algorithm \ref{Para_Multigrid}.
Suppose $u_{i,h_n} =  \bar u_{i,h_n}+r_i$ and $u_{j,h_n} = \bar u_{j,h_n}+r_j$ corresponding
to $\bar\lambda_{i,h_n}\neq \bar\lambda_{j,h_n}$.
By Theorem \ref{Error_Full_Multigrid_Theorem}, we have the error estimates for $r_i$ and $r_j$.
Furthermore, the orthogonality of $u_{i,h_n}$ and $u_{j,h_n}$ has following estimate
\begin{eqnarray*}
b(u_{i,h_n}, u_{j,h_n}) &=& b(\bar u_{i,h_n}+r_i, \bar u_{j,h_n}+r_j)
=b(r_i, \bar u_{j,h_n})+b(\bar u_{i,h_n}, r_j)+b(r_i, r_j)\nonumber\\
&\leq& \|r_i\|_b+\|r_j\|_b+\|r_i\|_b\| r_j\|_b.
\end{eqnarray*}
So Algorithm \ref{Para_Multigrid} can keep the orthogonality for different eigenfunction
when we do enough
correction steps ($\varpi_n$ is enough large) such that the algebraic accuracy
is enough small in the finest space $V_{h_n}$.
\end{remark}
\begin{theorem}\label{Error_Estimate_Theorem_Final}
Under the conditions of Theorem \ref{Error_Full_Multigrid_Theorem},
after implementing Algorithm \ref{Para_Multigrid}, there exists an eigenpair $(\lambda, u)$ of (\ref{weak_eigenvalue_problem}) such that
the eigenpair approximation $(\lambda_{i,h_n},u_{i,h_n})$ has  following
error estimates for $i=1,\cdots,m$
\begin{eqnarray}
&&\|u-u_{i,h_{n}}\|_a \leq \frac{1}{1-\bar D_\lambda\eta(V_H)} \left(1+2\beta\gamma^{\varpi_n} \Big(1 + \frac{\gamma^{\varpi}\beta}{1-\gamma^{\varpi}\beta}\Big)\right)\delta(u,V_{h_n}),\label{Final_Error_fun_H1}\\
&&\|u-u_{i,h_{n}}\|_b\leq 2\Big(1+\frac{1}{\lambda_1\delta_\lambda}\Big)\eta(V_H)\frac{1}{1-\bar D_\lambda\eta(V_H)} \Big(1+\Big.\nonumber\\
&&\quad\quad\quad\quad\quad\quad\quad\quad \quad\quad\quad\quad\quad\quad\quad\quad\ \ \Big.2\beta \gamma^{\varpi_n}
\Big(1 + \frac{\gamma^{\varpi}\beta}{1-\gamma^{\varpi}\beta}\Big) \Big)\delta(u,V_{h_n}),
\label{Final_Error_fun_L2}\\
&&|\lambda -\lambda_{i,h_{n}}| \leq \lambda_{i,h_n}\left(\frac{1}{1-\bar D_\lambda\eta(V_H)}\right)^2 \left(1+2\beta\gamma^{\varpi_n} \Big(1 + \frac{\gamma^{\varpi}\beta}{1-\gamma^{\varpi}\beta}\Big)\right)^2\delta^2(u,V_{h_n}).\ \ \label{Final_Error_eigen}
\end{eqnarray}
\end{theorem}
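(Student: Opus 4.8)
The plan is to derive each of the three estimates by a triangle inequality that separates the \emph{discretization error} between the exact eigenpair $(\lambda_i,u_i)$ and its Galerkin approximation $(\bar\lambda_{i,h_n},\bar u_{i,h_n})$ from the \emph{algebraic error} between $(\bar\lambda_{i,h_n},\bar u_{i,h_n})$ and the computed pair $(\lambda_{i,h_n},u_{i,h_n})$. Here $(\lambda_i,u_i)$ is the exact eigenpair to which $(\bar\lambda_{i,h_n},\bar u_{i,h_n})$ is closest in the sense of Theorem~\ref{Error_Estimate_Theorem}, and $(\bar\lambda_{i,h_n},\bar u_{i,h_n})$ is the discrete eigenpair of (\ref{Weak_Eigenvalue_Discrete}) closest to $(\lambda_{i,h_n},u_{i,h_n})$ as assumed throughout Section~3. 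The algebraic contribution is supplied directly by Theorem~\ref{Error_Full_Multigrid_Theorem}, whose hypothesis $\gamma^\varpi\beta<1$ is exactly condition~\eqref{Convergence_Condition}; the discretization contribution is controlled by Theorems~\ref{Error_Estimate_Theorem} and~\ref{Error_Superclose_Theorem}.

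For the energy-norm estimate~\eqref{Final_Error_fun_H1} I would start from
\begin{eqnarray*}
\|u_i-u_{i,h_n}\|_a \le \|u_i-\bar u_{i,h_n}\|_a + \|\bar u_{i,h_n}-u_{i,h_n}\|_a .
\end{eqnarray*}
For the first summand, apply \eqref{Err_Norm_1} with $h=h_n$, replace $\delta_{\lambda,h_n}$ by its uniform lower bound $\delta_\lambda$, and use that $\|(I-\mathcal P_{h_n})u_i\|_a=\inf_{v\in V_{h_n}}\|u_i-v\|_a\le\delta_{h_n}(\lambda_i)$ since $\|u_i\|_a=1$ and $u_i\in M(\lambda_i)$. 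This gives the first term $\sqrt{2(1+\frac{1}{\lambda_1\delta_\lambda^2}\eta^2(V_{h_n}))}\,\delta_{h_n}(\lambda_i)$. The second summand is precisely~\eqref{FM_Err_fun_H1}. Adding the two yields~\eqref{Final_Error_fun_H1}.

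For the $L^2$-type estimate~\eqref{Final_Error_fun_L2} I would split the same way. The discretization term $\|u_i-\bar u_{i,h_n}\|_b$ is bounded through \eqref{L2_Error_Estimate}--\eqref{Error_2} (with $\delta_{\lambda,h_n}\ge\delta_\lambda$) by $2(1+\frac{1}{\lambda_1\delta_\lambda})\eta(V_{h_n})\|u_i-\bar u_{i,h_n}\|_a$, into which one inserts the energy-norm discretization bound just obtained; the algebraic term is~\eqref{FM_Err_fun_L2} combined with~\eqref{FM_Err_fun_H1}, which keeps the prefactor $\eta(V_H)$ rather than $\eta(V_{h_n})$. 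Factoring out the common $2(1+\frac{1}{\lambda_1\delta_\lambda})$ produces~\eqref{Final_Error_fun_L2}. For the eigenvalue estimate~\eqref{Final_Error_eigen}, I would not split but instead use the Rayleigh quotient expansion of Lemma~\ref{Rayleigh_quotient_expansion_lem} with $w=u_{i,h_n}$: since $a(u_{i,h_n},u_{i,h_n})=1$ one has $\lambda_{i,h_n}=1/b(u_{i,h_n},u_{i,h_n})$, and dropping the nonpositive term $-\lambda_i\,b(u_{i,h_n}-u_i,u_{i,h_n}-u_i)/b(u_{i,h_n},u_{i,h_n})$ gives $|\lambda_i-\lambda_{i,h_n}|\le\lambda_{i,h_n}\|u_i-u_{i,h_n}\|_a^2$; squaring~\eqref{Final_Error_fun_H1} then gives~\eqref{Final_Error_eigen}.

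The computations are essentially bookkeeping, so the main obstacle is organizational: one must make sure $\delta_{\lambda,h_n}$ is consistently replaced by $\delta_\lambda$, that $\eta(V_{h_n})$ and $\eta(V_H)$ end up in the correct places, and that both contributions scale like $\delta_{h_n}(\lambda_i)$ — which is where~\eqref{Convergence_Condition} enters via Theorem~\ref{Error_Full_Multigrid_Theorem}. The one genuine subtlety is the identification of the exact eigenpair $(\lambda_i,u_i)$ and the estimate $\|(I-\mathcal P_{h_n})u_i\|_a\le\delta_{h_n}(\lambda_i)$, which both presuppose that $\bar u_{i,h_n}$ approximates an eigenfunction in $M(\lambda_i)$; this follows from the recursive argument already used in the proof of Theorem~\ref{Error_Full_Multigrid_Theorem} together with the ``closest eigenpair'' convention fixed before Theorem~\ref{Error_Estimate_One_Smoothing_Theorem}, and for the eigenvalue bound one also tacitly relies on $\lambda_{i,h_n}\ge\lambda_i$ up to higher-order terms so that the retained term in the Rayleigh expansion is the dominant one.
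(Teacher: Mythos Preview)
Your proposal is correct and follows essentially the same route as the paper: the energy-norm bound is obtained by the triangle inequality $\|u_i-u_{i,h_n}\|_a\le\|u_i-\bar u_{i,h_n}\|_a+\|\bar u_{i,h_n}-u_{i,h_n}\|_a$ with \eqref{Err_Norm_1} and \eqref{FM_Err_fun_H1}; the $\|\cdot\|_b$ bound by the same split using \eqref{Error_2} for the discretization part and \eqref{FM_Err_fun_L2}--\eqref{FM_Err_fun_H1} for the algebraic part; and the eigenvalue bound directly from Lemma~\ref{Rayleigh_quotient_expansion_lem} applied to $w=u_{i,h_n}$ together with \eqref{Final_Error_fun_H1}. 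Your remarks on replacing $\delta_{\lambda,h_n}$ by $\delta_\lambda$, on where $\eta(V_{h_n})$ versus $\eta(V_H)$ appear, and on the tacit sign issue in the Rayleigh expansion are all apt and match the paper's treatment.
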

\begin{proof}
From (\ref{Delta_V_h_P_h}), (\ref{Err_Norm_1_Superclose}), (\ref{Inequality_11}), Theorem \ref{Error_Full_Multigrid_Theorem}
 and (\ref{Convergence_Condition}), we have following estimates
\begin{eqnarray*}\label{Estimate_Final}
&&\|u-u_{i,h_n}\|_a  \leq \| u - \bar u_{i,h_n}\|_a + \|\bar u_{i,h_n} - u_{i,h_n}\|_a\nonumber\\
&\leq& \frac{1}{1-\bar D_\lambda\eta(V_H)}
\|(I-\mathcal P_{h_n})u\|_a + \frac{2\beta}{1-\bar D_\lambda\eta(V_H)}\gamma^{\varpi_n}
\Big(1 + \frac{\gamma^{\varpi}\beta}{1-\gamma^{\varpi}\beta}\Big)\delta(u,V_{h_n})\nonumber\\
&=&\frac{1}{1-\bar D_\lambda\eta(V_H)} \left(1+2\beta\gamma^{\varpi_n} \Big(1 + \frac{\gamma^{\varpi}\beta}{1-\gamma^{\varpi}\beta}\Big)\right)\delta(u,V_{h_n}).
\end{eqnarray*}
This is the desired result (\ref{Final_Error_fun_H1}).

From (\ref{Delta_V_h_P_h}), \eqref{Err_Norm_1_Superclose}, (\ref{Error_2}), (\ref{Inequality_11}),  \eqref{FM_Err_fun_H1}, \eqref{FM_Err_fun_L2}
and  (\ref{Final_Error_fun_H1}), $\|u- u_{i,h_n}\|_b$ has following
estimates
\begin{eqnarray*}
&&\|u- u_{i,h_n}\|_b \leq \|u-\bar u_{i,h_n}\|_b + \|\bar u_{i,h_n}-u_{i,h_n}\|_b\nonumber\\
&&\leq 2\Big(1+\frac{1}{\lambda_1\delta_\lambda}\Big)\eta(V_{h_n})\|u- \bar u_{i,h_n}\|_a
+  2\Big(1+\frac{1}{\lambda_1\delta_\lambda}\Big)\eta(V_H)\|\bar u_{i,h_n} - u_{i,h_n}\|_a\nonumber\\
&&\leq 2\Big(1+\frac{1}{\lambda_1\delta_\lambda}\Big)\eta(V_{h_n})
\frac{1}{1-\bar D_\lambda\eta(V_H)}\delta(u,V_{h_n}) \nonumber\\
&&\ \ \ + 2\Big(1+\frac{1}{\lambda_1\delta_\lambda}\Big)\eta(V_H)\frac{2\beta}{1-\bar D_\lambda\eta(V_H)}\gamma^{\varpi_n} \Big(1 + \frac{\gamma^{\varpi}\beta}{1-\gamma^{\varpi}\beta}\Big)\delta(u,V_{h_n})\nonumber\\
&&\leq  2\Big(1+\frac{1}{\lambda_1\delta_\lambda}\Big)\eta(V_H)\frac{1}{1-\bar D_\lambda\eta(V_H)} \left(1+
2\beta \gamma^{\varpi_n}
\Big(1 + \frac{\gamma^{\varpi}\beta}{1-\gamma^{\varpi}\beta}\Big) \right)\delta(u,V_{h_n}).
\end{eqnarray*}
This is the desired result (\ref{Final_Error_fun_L2}).
From (\ref{Rayleigh_quotient_expansion}) and \eqref{Final_Error_fun_H1}, the error estimate for $|\lambda-\lambda_{i,h_n}|$ can be deduced as follows
\begin{eqnarray*}
|\lambda-\lambda_{i,h_n}| \leq \frac{\|u - u_{i,h_n}\|_a^2}{\|u_{i,h_n}\|_b^2}
\leq \lambda_{i,h_n}\|u - u_{i,h_n}\|_a^2.
\end{eqnarray*}
Then  the desired result (\ref{Final_Error_eigen}) is obtained and the proof is complete.
\end{proof}
From Proposition \ref{propCondition_1} and Theorem \ref{Error_Estimate_Theorem_Final},
it is easy to deduce following explicit error estimates for the
eigenpair approximation $(\lambda_{i,h_n}, u_{i,h_n})$ by Algorithm \ref{Para_Multigrid}.
\begin{corollary}\label{Error_Estimate_Corollary_Final}
After implementing Algorithm \ref{Para_Multigrid}, there exists an eigenpair $(\lambda, u)$ of (\ref{weak_eigenvalue_problem}) such that
the eigenpair approximation $(\lambda_{i,h_n},u_{i,h_n})$ has the following error estimates
for $i=1,\cdots,m$
\begin{eqnarray}
&&\|u-u_{i,h_{n}}\|_a \leq \frac{C}{1-\bar D_\lambda H}\left(1+ 2\beta \gamma^{\varpi_n}\right)\sqrt{\lambda_i}h_n,\label{Final_Error_fun_H1_Corollary}\\
&&\|u-u_{i,h_{n}}\|_b\leq  \frac{CH}{1-\bar D_\lambda H}\left(1+ 2\beta \gamma^{\varpi_n}\right)\sqrt{\lambda_i}h_n,\label{Final_Error_fun_L2_Corollary}\\
&&|\lambda_i -\lambda_{i,h_{n}}| \leq \lambda_i^2 \left(\frac{C}{1-\bar D_\lambda H}\right)^2
\left(1+ 2\beta \gamma^{\varpi_n}\right)^2h_n^2,\label{Final_Error_eigen_Corollary}
\end{eqnarray}
where the constant $C$ depends on $\lambda_1$, spectral gap $\delta_\lambda$, $\gamma$
and $\beta$ but independent of the mesh size $h_n$.
\end{corollary}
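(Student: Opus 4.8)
The plan is to derive Corollary \ref{Error_Estimate_Corollary_Final} directly from Proposition \ref{propCondition_1} and Theorem \ref{Error_Estimate_Theorem_Final}, by substituting the explicit finite element bounds of the former into the abstract estimates \eqref{Final_Error_fun_H1}--\eqref{Final_Error_eigen} of the latter and then absorbing every mesh-size- and eigenvalue-independent factor into the generic constant $C$. First I would record the ingredients supplied by \eqref{eqCondition_1}: $\eta(V_{h_n})\le Ch_n$, $\eta(V_H)\le CH$, and $\delta_{h_n}(\lambda_i)\le C\sqrt{\lambda_i}\,h_n$, where $C$ depends only on $\Omega$ and the finite element degree. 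In particular $\frac{1}{\lambda_1\delta_\lambda^2}\eta^2(V_{h_n})\le Ch_n^2$ after renaming the constant, and, under the standing hypothesis \eqref{Convergence_Condition} that $\gamma^{\varpi}\beta<1$, the quantity $1+\frac{2\gamma^{\varpi}\beta}{1-\gamma^{\varpi}\beta}$ is finite and depends only on $\gamma$ and $\beta$, hence may be hidden in $C$ as well.

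For the energy-norm bound I would insert these into \eqref{Final_Error_fun_H1}, obtaining
\[
\|u_i-u_{i,h_n}\|_a\le\Big(\sqrt{2\big(1+Ch_n^2\big)}+2\beta\gamma^{\varpi_n}\Big)\,C\sqrt{\lambda_i}\,h_n,
\]
which is \eqref{Final_Error_fun_H1_Corollary} after collecting constants. I would treat the $L^2$-bound \eqref{Final_Error_fun_L2} the same way, now using $\eta(V_H)\le CH$ to replace $\eta(V_H)$ by $H$ up to a constant and noting that $2\big(1+\frac{1}{\lambda_1\delta_\lambda}\big)$ is again mesh independent; this yields \eqref{Final_Error_fun_L2_Corollary}. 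For the eigenvalue bound I would start from \eqref{Final_Error_eigen}, use $\delta_{h_n}^2(\lambda_i)\le C\lambda_i h_n^2$, and bound the prefactor $\lambda_{i,h_n}$ by $C\lambda_i$; this last step is legitimate because the upper bound \eqref{Uppero_Bound_Result} together with the smallness of $|\lambda_i-\lambda_{i,h_n}|$ already furnished by \eqref{Final_Error_eigen} forces $\lambda_{i,h_n}$ to be comparable to $\lambda_i$ once $h_n$ is small. The outcome is the $C\lambda_i^2$ prefactor appearing in \eqref{Final_Error_eigen_Corollary}.

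I do not expect a genuine analytical obstacle here: the corollary is a bookkeeping consequence of the two preceding results. The only point that calls for care is being explicit about which quantities are treated as fixed --- namely $\lambda_1$, the uniform spectral gap $\delta_\lambda$, the refinement index $\beta$ and the contraction-related factor $\gamma$ --- so that they may legitimately be absorbed into $C$, and making sure the estimate $\lambda_{i,h_n}\le C\lambda_i$ invoked in the eigenvalue bound is justified rather than tacitly assumed.
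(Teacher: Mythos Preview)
Your proposal is correct and follows exactly the route the paper indicates: the corollary is stated immediately after Theorem \ref{Error_Estimate_Theorem_Final} with the remark that it follows from Proposition \ref{propCondition_1} and Theorem \ref{Error_Estimate_Theorem_Final}, and no further proof is given. Your substitution of the bounds \eqref{eqCondition_1} into \eqref{Final_Error_fun_H1}--\eqref{Final_Error_eigen} and absorption of the $\lambda_1,\delta_\lambda,\gamma,\beta$-dependent factors into $C$ is precisely what the authors have in mind, and your explicit justification of $\lambda_{i,h_n}\le C\lambda_i$ is in fact more careful than the paper itself.
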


\begin{remark}
When $m=1$,  Algorithm \ref{Para_Multigrid} becomes a sequential algorithm.
Even in this case, we can  deal with different eigenpair individually,
which always has a better efficiency than traditional algorithm when the
number of desired eigenpairs is large enough.
\end{remark}


\section{Work estimate of parallel augmented subspace method}
Now we turn our attention to the estimate of computational work
for the parallel augmented subspace scheme defined by Algorithm \ref{Para_Multigrid}.

First, we define the dimension of each level of finite element space as $N_k:={\rm dim}V_{h_k}$.
Then the following property holds
\begin{eqnarray}\label{relation_dimension}
N_k\approx\Big(\frac{1}{\beta}\Big)^{d(n-k)}N_n,\ \ \ k=1,2,\cdots, n.
\end{eqnarray}

\begin{theorem}\label{work}
and the work of multigrid Assume that the eigenvalue problem solving in the coarse spaces $V_{H}$ and $V_{h_1}$ need work
$\mathcal{O}(M_H)$ and $\mathcal{O}(M_{h_1})$, respectively,  for the
boundary value problem (\ref{correct_source_exact_para})
in each process is $\mathcal{O}(N_k)$ in the $k$-th level of mesh.
Then the most work involved in each
computing node of {\it Algorithm  \ref{Para_Multigrid} } is
$\mathcal{O}((\varpi_n+\varpi/\beta^d)N_n+ (\varpi\log N_n+\varpi_n)M_H+M_{h_1})$
and the included constant is independent of
the number $m$ of the desired eigenpairs.
Furthermore, the complexity will be $\mathcal{O}((\varpi_n+\varpi/\beta^d)N_n)$
provided $M_H\ll N_k$ and $M_{h_1}\leq N_n$.
\end{theorem}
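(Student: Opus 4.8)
The plan is to follow the computational flow of Algorithm~\ref{Para_Multigrid} and sum the cost contributions from each constituent operation, then simplify using the dimension relation~\eqref{relation_dimension}. There are three distinct kinds of work to account for: (i) the single eigenvalue solve on $V_{h_1}$ in Step~1, costing $\mathcal{O}(M_{h_1})$; (ii) for each correction step on level $V_{h_k}$, the linear boundary value solve~\eqref{correct_source_exact_para} by multigrid, costing $\mathcal{O}(N_k)$ per call; and (iii) for each correction step, the low-dimensional eigenvalue solve~\eqref{parallel_correct_eig_exact} on $V_{H,h_k}$, whose matrix~\eqref{Eigenvalue_H_h} has dimension $N_H+1$, together with the assembly of $b_{H,h_k}$, $c_{H,h_k}$, $\beta_k$, $\zeta_k$. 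The eigenvalue solve itself costs $\mathcal{O}(M_H)$; the assembly of the coupling vectors requires one inner product against each of the $N_H$ coarse basis functions with $\widetilde u_{h_k}$, which (because the coarse basis functions are supported on the coarse mesh) amounts to $\mathcal{O}(N_k)$ work, but can also be bounded crudely — I would fold this into the $\mathcal{O}(N_k)$ term already present for the boundary value solve, or note it is dominated.

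First I would tabulate, per process $P_i$: Step~2(A) runs $n-2$ levels $k=1,\dots,n-2$, each with $\varpi$ correction steps on $V_{h_{k+1}}$, contributing $\varpi\sum_{k=2}^{n-1}\bigl(\mathcal{O}(N_k)+\mathcal{O}(M_H)\bigr)$; Step~2(B) runs $\varpi_n$ correction steps on $V_{h_n}$, contributing $\varpi_n\bigl(\mathcal{O}(N_n)+\mathcal{O}(M_H)\bigr)$. Summing the boundary-value contributions and invoking~\eqref{relation_dimension}, namely $N_k\approx\beta^{-d(n-k)}N_n$, gives
\begin{eqnarray*}
\varpi\sum_{k=2}^{n-1}N_k + \varpi_n N_n
&\approx& \varpi N_n\sum_{j=1}^{n-2}\Big(\frac{1}{\beta^d}\Big)^{j} + \varpi_n N_n
\;\leq\; \frac{\varpi}{\beta^d-1}N_n + \varpi_n N_n
\;=\;\mathcal{O}\Big(\big(\varpi_n+\varpi/\beta^d\big)N_n\Big),
\end{eqnarray*}
since the geometric series in $\beta^{-d}$ converges (as $\beta>1$, $d\ge 1$). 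The coarse eigenvalue solves contribute $\varpi(n-2)\mathcal{O}(M_H)+\varpi_n\mathcal{O}(M_H)$; here $n-2=\mathcal{O}(\log N_n)$ because $h_n=\beta^{-(n-1)}h_1$ forces $N_n\approx\beta^{d(n-1)}N_1$, hence $n\approx\log_{\beta^d}N_n$, giving the $\mathcal{O}((\varpi\log N_n+\varpi_n)M_H)$ term. Adding Step~1's $\mathcal{O}(M_{h_1})$ yields the claimed $\mathcal{O}((\varpi_n+\varpi/\beta^d)N_n+(\varpi\log N_n+\varpi_n)M_H+M_{h_1})$. The final simplification to $\mathcal{O}((\varpi_n+\varpi/\beta^d)N_n)$ is then immediate under the hypotheses $M_H\ll N_k$ (so $(\varpi\log N_n+\varpi_n)M_H$ is lower order than $(\varpi_n+\varpi/\beta^d)N_n$) and $M_{h_1}\le N_n$; and since every process executes exactly the same workload for its assigned eigenpair with no inter-process data exchange or orthogonalization, the per-node estimate carries no dependence on $m$.

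The step I expect to require the most care is the bookkeeping that turns the number of levels $n$ into $\mathcal{O}(\log N_n)$ and the verification that the geometric sum of the $N_k$ collapses to $\mathcal{O}(N_n)$ with the stated $\varpi/\beta^d$ prefactor — this is where the refinement relation~\eqref{mesh_size_recur} and the dimension recursion~\eqref{relation_dimension} must be used precisely, and where one must be slightly careful that $\varpi=1$ is used on the intermediate levels (cf.\ Remark~\ref{Remark_Orthogonal}) so that the intermediate-level cost is genuinely $\mathcal{O}(\varpi/\beta^d\cdot N_n)$ rather than something growing with $n$. The other mild subtlety is justifying that assembling the augmented-subspace matrix~\eqref{Eigenvalue_H_h} at each correction step is absorbed by the $\mathcal{O}(N_k)$ boundary-value cost rather than introducing a separate $\mathcal{O}(N_H N_k)$ term; this relies on the local support of the coarse Lagrange basis so that $b_{H,h_k}$ and $c_{H,h_k}$ are computed by a single sparse matrix-vector product.
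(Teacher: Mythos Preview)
Your proposal is correct and follows essentially the same approach as the paper: decompose the per-process work level by level as $W_k=\mathcal{O}(\varpi(N_k+M_H))$ for $k=2,\dots,n-1$ and $W_n=\mathcal{O}(\varpi_n(N_n+M_H))$, sum the geometric series $\sum_k N_k$ via~\eqref{relation_dimension}, and convert the level count $n$ into $\mathcal{O}(\log N_n)$. Your extra discussion of the assembly cost for the augmented matrix~\eqref{Eigenvalue_H_h} is a welcome refinement the paper leaves implicit; your caveat about needing $\varpi=1$ on intermediate levels is unnecessary, however, since the geometric series $\varpi\sum_{k=2}^{n-1}N_k=\mathcal{O}(\varpi N_n/\beta^d)$ already holds for any fixed $\varpi$ independently of $n$.
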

\begin{proof}
Let $W_k$ denote the work in each computing node for the correction  step which is defined by
Algorithm \ref{one correction step}
in the $k$-th level of finite element space $V_{h_k}$.
Then from the  definitions of Algorithms \ref{one correction step} and \ref{Para_Multigrid}, we have
\begin{eqnarray}\label{work_k}
W_k=\mathcal{O}(\varpi(N_k+M_H)), \ {\rm for}\  k=2, \cdots, n-1,\ \ {\rm and}\ \
W_n=\mathcal{O}(\varpi_n(N_n+M_H)).
\end{eqnarray}
Iterating (\ref{work_k}) and using the fact (\ref{relation_dimension}), we obtain
\begin{eqnarray}\label{Work_Estimate}
{\rm Total\ Work} &=&\sum_{k=1}^nW_k= \mathcal{O}\left(M_{h_1}+\sum_{k=2}^{n-1}\varpi\big(N_k+M_H\big) + \varpi_n\big(N_n+M_H\big)\right)\nonumber\\
&=&\mathcal{O}\left(M_{h_1}+ (\varpi(n-2)+\varpi_n)M_H+\sum_{k=2}^{n-1}\varpi N_k + \varpi_nN_n\right)\nonumber\\
&=&\mathcal{O}\left(M_{h_1}+(\varpi(n-2)+\varpi_n)M_H+\sum_{k=2}^{n-1}\left(\frac{1}{\beta}\right)^{d(n-k)}\varpi N_n+\varpi_n N_n\right)\nonumber\\
&=&\mathcal{O}\left(\Big(\varpi_n+ \frac{\varpi}{\beta^d}\Big)N_n+ (\varpi\log N_n+\varpi_n)M_H+M_{h_1}\right).
\end{eqnarray}
This is the desired result $\mathcal{O}((\varpi_n+\varpi/\beta^d)N_n+ (\varpi\log N_n+\varpi_n)M_H+M_{h_1})$ and the
one $\mathcal{O}((\varpi_n+\varpi/\beta^d)N_n)$  can be obtained by the conditions $M_H\ll N_n$ and $M_{h_1}\leq N_n$.
\end{proof}
\begin{remark}
Since there exists no data transfer between different processes, the total computational work
of Algorithm \ref{Para_Multigrid} in each process
is equal to that of one process  for only one eigenpair.

Further, since $\gamma$ has a uniform bound from $1$ ($\gamma < 1$),
then we do not need to do many correction steps in
each level of finite element space.  As in Remark \ref{Remark_Gamma},
we choose $\varpi=1$ for $k=2,\cdots, n-2$ and $\varpi_n$ is dependent on the
algebraic accuracy $\varepsilon$. Then the final computational work in each processor should be $\mathcal{O}(N_n|\log\varepsilon|)$ and
the included constant is independent of the number $m$ of the desired eigenpairs.
\end{remark}

\section{Numerical results}
In this section, we provide four numerical examples to validate the proposed numerical method in this paper.

\subsection{The model eigenvalue problem}
In this subsection, we use Algorithm \ref{Para_Multigrid} to solve the following model eigenvalue problem:
Find $(\lambda,u)\in\mathcal R\times V$ such that $\|u\|_a=1$ and
\begin{eqnarray}\label{Model_Eigenvalue_Problem}
\left\{
\begin{array}{rcl}
-\Delta u &=&\lambda u,\ \ \ {\rm in}\ \Omega,\\
u&=&0,\ \ \ \ \ {\rm on}\ \partial\Omega,
\end{array}
\right.
\end{eqnarray}
where $\Omega = (0,1)\times (0, 1)\times (0, 1)$.

In this example, we choose $H = 1/16$, $\beta =2$ and $\varpi=\varpi_n=1$. 
In the first step of one correction step defined by Algorithm \ref{one correction step},
$1$ multigrid step with $2$ Conjugate Gradient (CG) steps for pre- and post-smoothing is adopted to
solve the linear problem (\ref{correct_source_exact_para}).

\revise{First, we investigate the efficiency of the proposed algorithm.  For this aim,
Algorithm \ref{Para_Multigrid} is compared with
the LOBPCG method \cite{Knyazev_Lobpcg,Knyazev2,KnyazevNeymeyr} from the package: Slepc \cite{SLEPC}.
For the sake of fairness, these two methods use the same number of processors and
the linear equations included in LOBPCG method are solved by multigrid iterations.
Furthermore, they use the same convergence criterion which is to be $\|Ax-\lambda Bx\|_2/|\lambda|\leq 1{\tt e}$-$8$
for the algebraic eigenvalue problem: $Ax=\lambda Bx$, where $\|\cdot\|_2$ denotes the $L^2$-norm for vectors.
The numerical comparisons are carried out on LSSC-IV in the State Key Laboratory of Scientific and Engineering
Computing, Chinese Academy of Sciences. Each computing node has two $18$-core Intel Xeon Gold $6140$
processors at $2.3$ GHz and $192$ GB memory.
Here, we use $200$ processors for computing the first $200$ eigenpairs
and  $1000$ processors for the first $1000$ eigenpairs.
Tables \ref{table1} and \ref{table2} show the information of CPU time
for computing the $200$ and $1000$ eigenpairs by Algorithm \ref{Para_Multigrid} and LOBPCG method.
The corresponding memory consumptions are shown in Tables \ref{table3} and \ref{table4}.
From these tables, we can find that Algorithm \ref{Para_Multigrid} has
better efficiency and smaller memory consumption than the LOBPCG method. }

\begin{table}[!hbt]
\begin{center}
\caption{The CPU time of Algorithm \ref{Para_Multigrid} and LOBPCG for
the first $200$ eigenpairs of (\ref{Model_Eigenvalue_Problem}), where
the symbol ``$-$" means the computer runs out of memory.}\label{table1}
\vskip0.2cm
\begin{tabular}{|c|c|c|}\hline
Number of Dofs  & Time of LOBPCG & Time of Algorithm 2 \\ \hline
 274625        &367.75928      &   130.725864  \\ \hline
 2146689       & 5857.01709     &  237.001301  \\ \hline
 16974593      & $-$       &    1107.676507\\ \hline
\end{tabular}
\end{center}
\end{table}
\begin{table}[!hbt]
\begin{center}
\caption{The CPU time of Algorithm \ref{Para_Multigrid} and LOBPCG for
the first $1000$ eigenpairs of (\ref{Model_Eigenvalue_Problem}),
where the symbol $``-"$ means the computer runs out of memory.}\label{table2}
\vskip0.2cm
\begin{tabular}{|c|c|c|}\hline
Number of Dofs  & Time of LOBPCG & Time of Algorithm 2 \\ \hline
 274625        &2826.92015       &  132.50253   \\ \hline
 2146689       & $-$      &    242.77372   \\ \hline
 16974593      & $-$       &   1150.86141\\ \hline
\end{tabular}
\end{center}
\end{table}

\begin{table}[hbt!]
\begin{center}
\caption{The max memory (in $\mathrm{MB}$ ) of Algorithm \ref{Para_Multigrid} and LOBPCG for the first $200$
eigenpairs of (\ref{Model_Eigenvalue_Problem}), where the symbol $``-"$ means the computer runs out of memory.}\label{table3}
\begin{tabular}{|c|c|c|}
\hline Number of Dofs & Memory of LOBPCG & Memory of Algorithm 2 \\
\hline 274625 & 1342 & 353 \\
\hline 2146689 & 6273 & 1554 \\
\hline 16974593 & $-$ & 7251 \\
\hline
\end{tabular}
\end{center}
\end{table}

\begin{table}[hbt!]
\begin{center}
\caption{The max memory (in $\mathrm{MB}$ ) of Algorithm \ref{Para_Multigrid} and LOBPCG for the first $1000$
eigenpairs of (\ref{Model_Eigenvalue_Problem}), where the symbol $``-"$ means the computer runs out of memory.}\label{table4}
\begin{tabular}{|c|c|c|}
\hline Number of Dofs & Memory of LOBPCG & Memory of Algorithm 2 \\
\hline 274625 & 3802 & 366 \\
\hline 2146689 &  $-$ & 1627 \\
\hline 16974593 & $-$ & 7452 \\
\hline
\end{tabular}
\end{center}
\end{table}


Figure \ref{ex1-error-time200} shows the corresponding error estimates of $|\lambda_i-\lambda_{i,h_n}|$
for $i=1, \cdots, 200$ and the CPU time for each eigenpair, respectively.
\begin{figure}[hbt!]
\centering
\includegraphics[width=6.4cm,height=5.4cm]{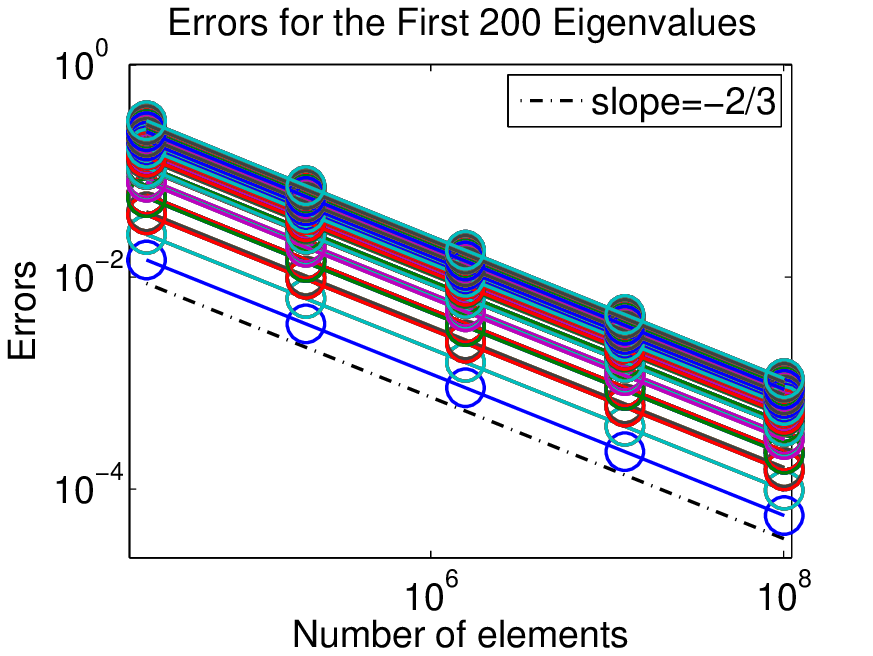}
\includegraphics[width=6.4cm,height=5.4cm]{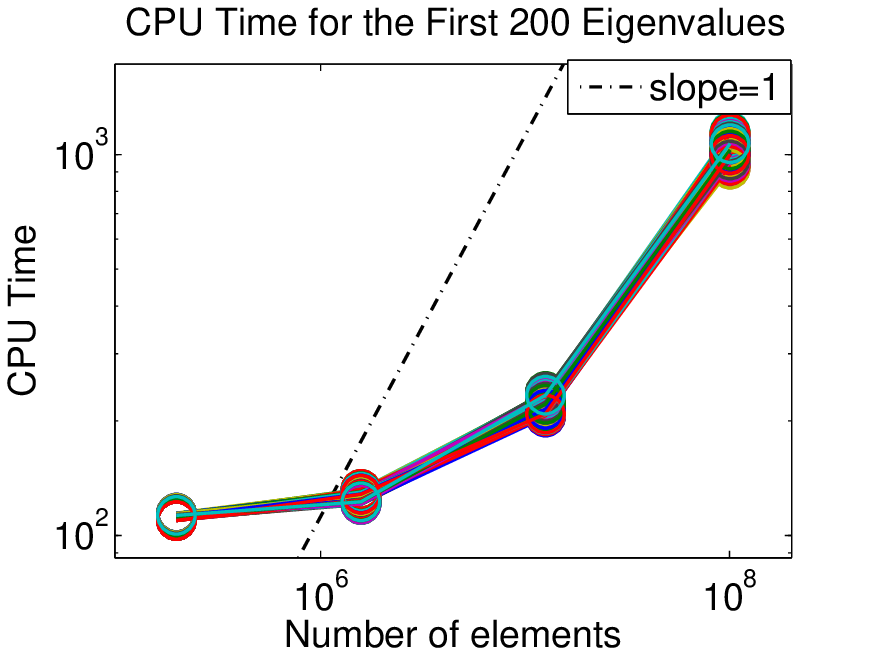}
\caption{\small  The errors and CPU time (in second)
of the parallel augmented subspace method for the
first 200 eigenpairs of Example 1.}\label{ex1-error-time200}
\end{figure}
From Figure \ref{ex1-error-time200}, we can find that
Algorithm \ref{Para_Multigrid} has the optimal error estimate, and needs similar computational work for different eigenpair.
\revise{Here, the optimal error estimate means the concerned eigenvalue approximations have
the errors which are bounded by the discretization errors of the finite element method on the
corresponding level of meshes.}

We also test the algebraic errors $|\bar\lambda_{i,h_n}-\lambda_{i,h_n}|$
between the numerical approximations by Algorithm \ref{Para_Multigrid}
and the exact finite element solutions for the first $20$ eigenvalues
on the finest level of mesh. The corresponding results are presented in
Figure \ref{ex1-error20} which shows that the algebraic accuracy  improves along
with the growth of numbers of correction steps $\varpi_n$.
\begin{figure}[hbt!]
\centering
\includegraphics[width=6.4cm,height=5.4cm]{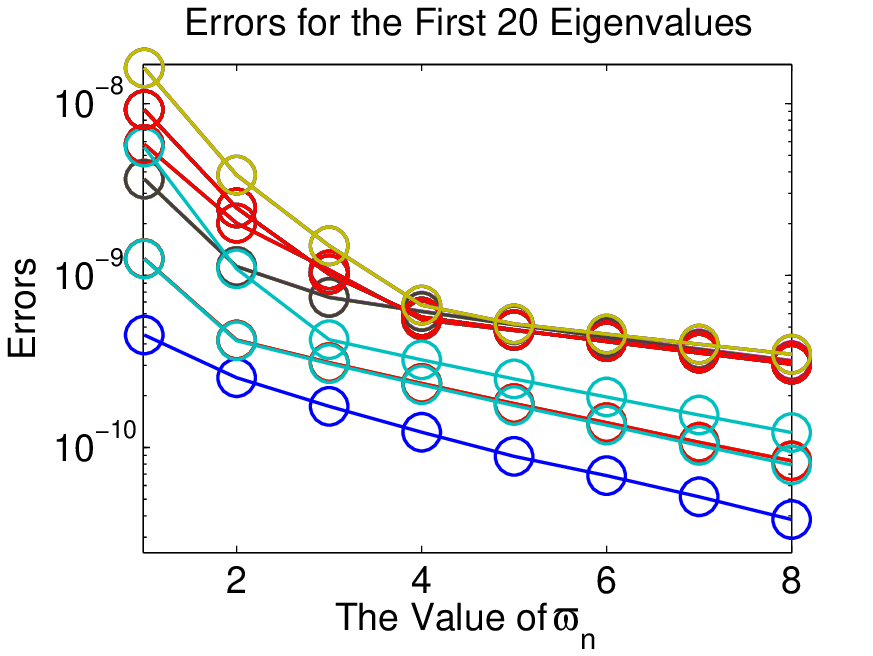}
\caption{\small The algebraic errors for the first
20 eigenvalues  of Example 1 by the parallel augmented subspace method
with different number of correction steps.}\label{ex1-error20}
\end{figure}

The performance of Algorithm \ref{Para_Multigrid} for computing the first $1000$ eigenpairs
is also investigated. Figure \ref{ex1-error-time1000} shows the error estimate and CPU time
for each eigenvalue. 
From Figure \ref{ex1-error-time1000}, we can also find the  parallel method
has optimal convergence order even for the
first $1000$ eigenpairs. These results show the efficiency of Algorithm \ref{Para_Multigrid} and
validity of Theorem \ref{Error_Estimate_Theorem_Final} and
Corollary \ref{Error_Estimate_Corollary_Final}.
\begin{figure}[hbt!]
\centering
\includegraphics[width=6.4cm,height=5.4cm]{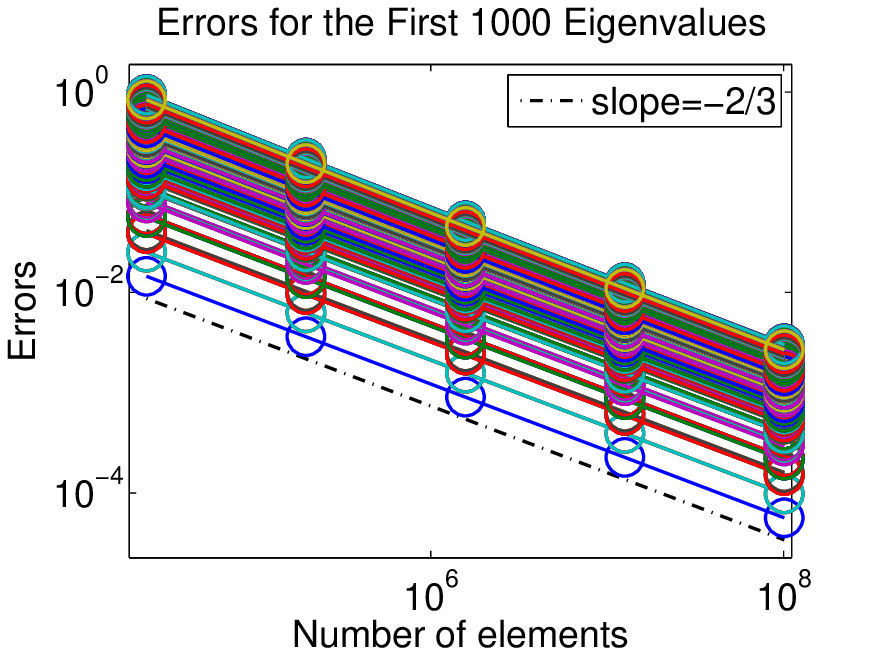}
\includegraphics[width=6.4cm,height=5.4cm]{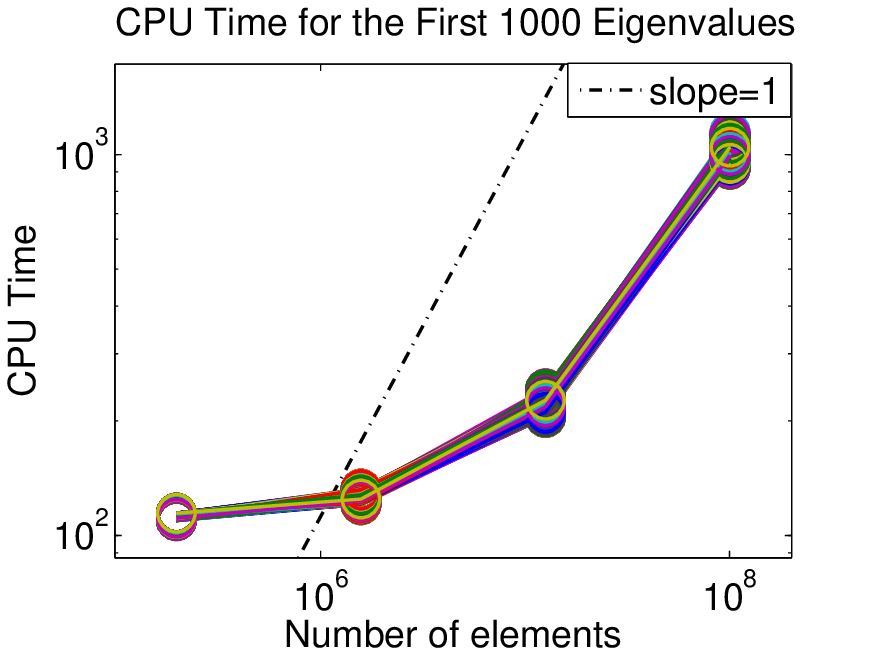}
\caption{\small The errors and CPU time (in second) of the
parallel augmented subspace method for the first 1000 eigenpairs
of Example 1.}\label{ex1-error-time1000}
\end{figure}

In order to check the orthogonality of approximate eigenfunctions by Algorithm \ref{Para_Multigrid},
we investigate the inner products of eigenfunctions corresponding to different eigenvalues.
We compute inner products for the first $100$ approximate eigenfunctions
on the finest level of mesh  by Algorithm \ref{Para_Multigrid}.
Figure \ref{ex1-inner} shows the biggest values of inner product of
eigenfunctions according to different eigenvalues along with the growth of correction steps on the finest level of mesh.
The results in Figure \ref{ex1-inner} show
that Algorithm \ref{Para_Multigrid} can keep the orthogonality
when the algebraic accuracy is small enough, which validates Remark \ref{Remark_Orthogonal}.
\begin{figure}[hbt!]
\centering
\includegraphics[width=6.4cm,height=5.4cm]{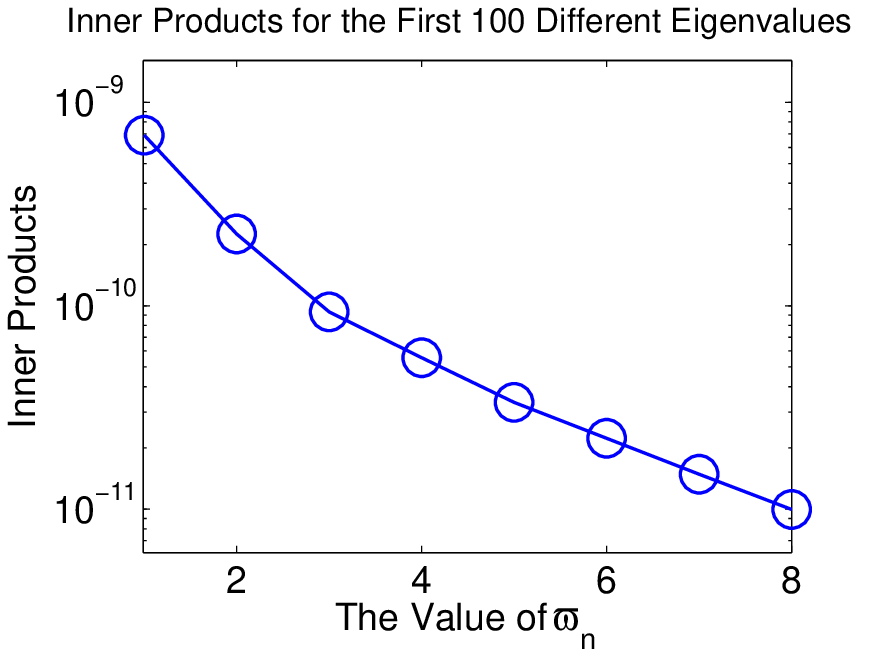}
\caption{\small  The inner products of the eigenfunctions corresponding to the first 100 different
eigenvalues of Example 1.}\label{ex1-inner}
\end{figure}

\subsection{A more general eigenvalue problem}
In this example, we consider the following second order elliptic
eigenvalue problem:
Find $(\lambda,u)\in\mathcal R\times V$ such that $\|u\|_a=1$ and
\begin{equation}\label{general}
\left\{
\begin{array}{rcl}
-\nabla\cdot(\mathcal{A}\nabla u) +\varphi u&=&\lambda u\ \ \ {\rm in}\ \Omega,\\
u&=&0\ \ \ \ \ {\rm on}\ \partial\Omega,\\
\|u\|_{0,\Omega}&=&1,
\end{array}
\right.
\end{equation}
where
\begin{equation*}
\mathcal{A}=\left(
\begin{array}{ccc}
$$1+(x_1-\frac{1}{2})^2$$&$$(x_1-\frac{1}{2})(x_2-\frac{1}{2})$$&$$(x_1-\frac{1}{2})(x_3-\frac{1}{2})$$\\
$$(x_1-\frac{1}{2})(x_2-\frac{1}{2})$$&$$1+(x_2-\frac{1}{2})^2$$&$$(x_2-\frac{1}{2})(x_3-\frac{1}{2})$$\\
$$(x_1-\frac{1}{2})(x_3-\frac{1}{2})$$&$$(x_2-\frac{1}{2})(x_3-\frac{1}{2})$$&$$1+(x_3-\frac{1}{2})^2$$
\end{array}
\right),
\end{equation*}
$\varphi=e^{(x_1-\frac{1}{2})(x_2-\frac{1}{2})(x_3-\frac{1}{2})}$ and
$\Omega = (0,1)\times (0, 1)\times (0, 1)$.

In order to check the parallel property of Algorithm \ref{Para_Multigrid},
we compute the first $200$ eigenpairs of (\ref{general}).
Here, we choose $H = 1/16$, $\beta =2$, $\varpi=\varpi_n=1$ and $n=5$.
In the first step of one correction step defined by Algorithm \ref{one correction step},
$1$ multigrid step with $2$ CG steps for pre- and post-smoothing is adopted to
solve the linear problem (\ref{correct_source_exact_para}).
Since the exact solutions are not known, the adequate accurate
approximations are chosen as the exact solutions for our numerical test.
Figure \ref{ex2-error-time200} shows the corresponding error estimates of
$|\lambda_i-\lambda_{i,h_n}|$ for $i=1, \cdots, 200$
and the CPU time for each eigenpair, respectively.
From Figure \ref{ex2-error-time200}, we  can find that
Algorithm \ref{Para_Multigrid} has the optimal error estimate,
and needs similar computational work for computing different eigenpair.
\begin{figure}[hbt!]
\centering
\includegraphics[width=6.4cm,height=5.4cm]{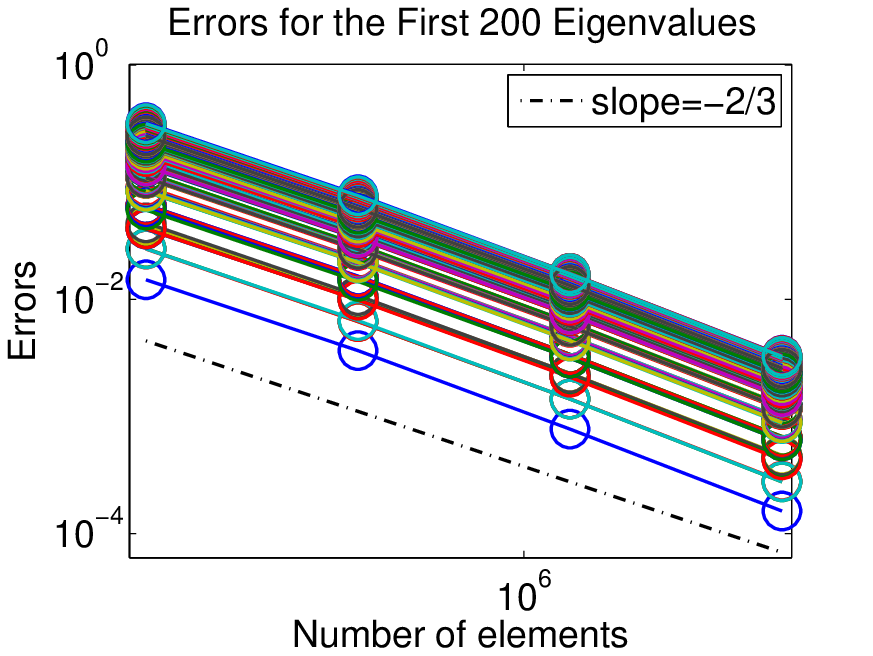}
\includegraphics[width=6.4cm,height=5.4cm]{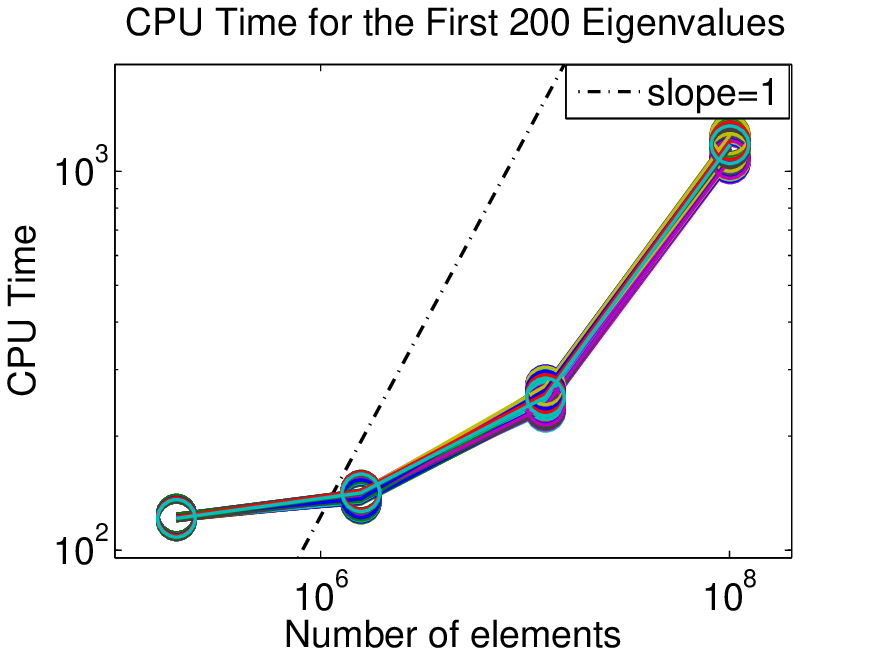}
\caption{\small  The errors and CPU time (in second) of the parallel augmented subspace
method for the first 200 eigenpairs  of Example 2.}\label{ex2-error-time200}
\end{figure}

In this example, we also test the algebraic error $|\bar\lambda_{i,h_n}-\lambda_{i,h_n}|$
between the numerical approximations by Algorithm \ref{Para_Multigrid} and the exact finite element
solutions for the first $20$ eigenvalues along with the growth of the number of correction steps.
The corresponding results are presented in Figure \ref{ex2-error20},  which shows
that the algebraic accuracy improves along with the growth of $\varpi_n$.
\begin{figure}[hbt!]
\centering
\includegraphics[width=6.4cm,height=5.4cm]{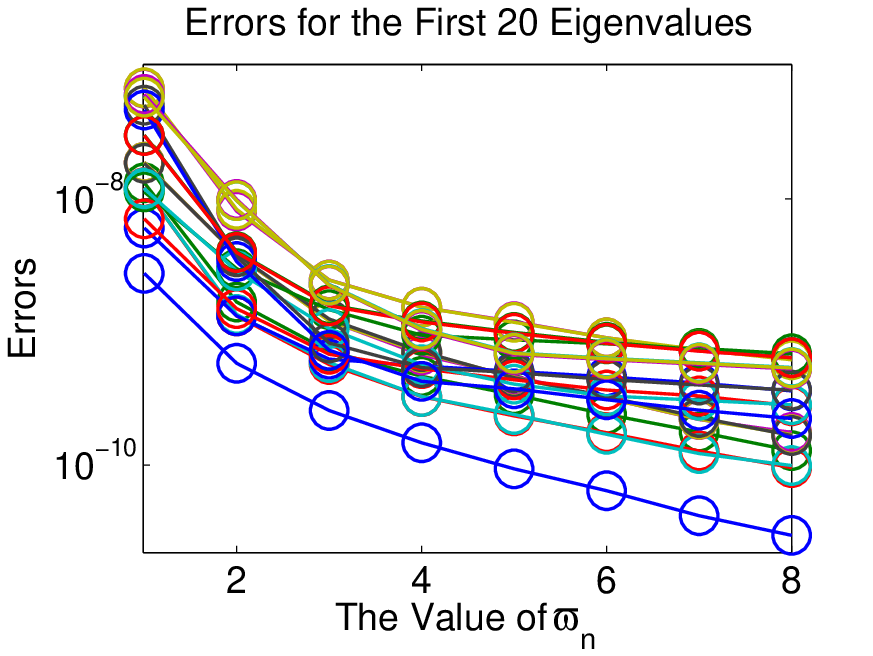}
\caption{\small The algebraic errors for the first 20 eigenvalues  of Example 2 by the parallel augmented subspace
method with different number of correction step times.}\label{ex2-error20}
\end{figure}

Then, we compute the first $1000$ eigenpairs. The corresponding error estimates
for the approximate eigenvalues and CPU time for each eigenpair are shown in
Figure \ref{ex2-error-time1000}. From Figure \ref{ex2-error-time1000},
we can also find that the  parallel method
can arrive the theoretical convergence order for the first $1000$ eigenpairs.
These results show the efficiency of Algorithm \ref{Para_Multigrid} and the validity
of Theorem \ref{Error_Estimate_Theorem_Final} and Corollary \ref{Error_Estimate_Corollary_Final}.
\begin{figure}[hbt!]
\centering
\includegraphics[width=6.4cm,height=5.4cm]{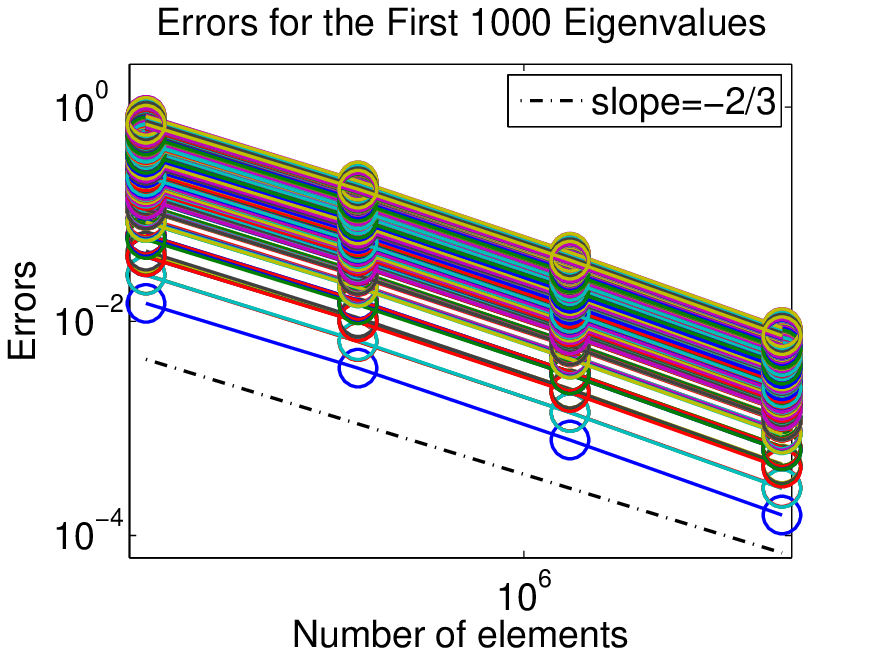}
\includegraphics[width=6.4cm,height=5.4cm]{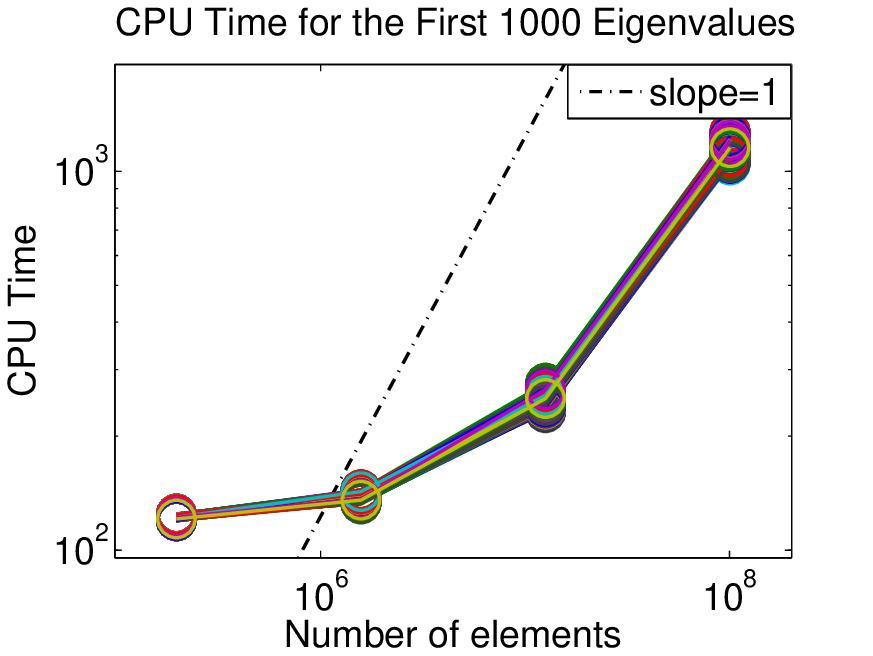}
\caption{\small The errors and CPU time (in second) of the parallel augmented subspace
method for the first 1000 eigenpairs  of Example 2.}\label{ex2-error-time1000}
\end{figure}

The orthogonality of approximate eigenfunctions by Algorithm \ref{Para_Multigrid} is also tested.
In Figure \ref{ex2-inner}, we also show the biggest values of inner product for the
first $100$ approximate eigenfunctions according to different eigenvalues  on the finest level of mesh.
Form Figure \ref{ex2-inner}, it can be found that Algorithm \ref{Para_Multigrid} can keep the orthogonality
when the algebraic accuracy is small enough.
\begin{figure}[hbt!]
\centering
\includegraphics[width=6.4cm,height=5.4cm]{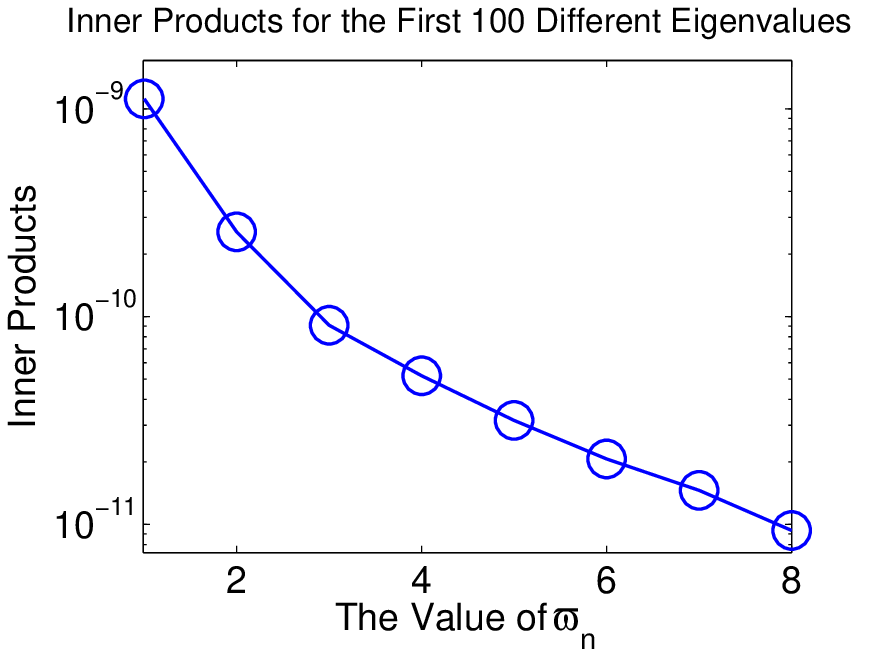}
\caption{\small  The inner products of the eigenfunctions corresponding to the first 100
different eigenvalues of Example 2.}\label{ex2-inner}
\end{figure}

\subsection{Adaptive finite element method}
In this example, we consider the following eigenvalue problem
(see \cite{Greiner}): Find $(\lambda,u)\in\mathcal R\times V$ such that $\|u\|_a=1$ and
\begin{equation}\label{afem_Eigenvalue_Problem}
-\frac{1}{2}\Delta u +\frac{1}{2}|x|^2u=\lambda u\ \ \ {\rm in}\ \Omega,
\end{equation}
where $\Omega= \mathcal R^3$ and $|x|=\sqrt{x_1^2+x_2^2+x_3^2}$.
The eigenvalues of (\ref{afem_Eigenvalue_Problem}) are
$$\lambda_{i,j,k}= i+j+k+\frac{3}{2},$$
where $i, j, k$ denote the integral numbers and $i,j,k \geq 0$.
Since the eigenfunctions are exponential decay, we set $\Omega=(-4, 4)^3$
and the boundary condition $u=0\ {\rm on}\ \partial\Omega$ in our computation for simplicity.
Since the exact eigenfunction with singularities is expected, the adaptive refinement
is adopted to couple with Algorithm \ref{Para_Multigrid} (cf. \cite{HongXieXu}).

In order to check the parallel property of Algorithm \ref{Para_Multigrid},
we compute the first $200$ eigenpairs of (\ref{afem_Eigenvalue_Problem}).
In this example, we choose $H = 1/4$, $\varpi=\varpi_n=1$.
In the first step of one correction step defined by Algorithm \ref{one correction step},
$1$ multigrid step with $2$ CG steps for pre- and post-smoothing is adopted to
solve the linear problem (\ref{correct_source_exact_para}).
Figure \ref{ex3-error-time200} shows the corresponding error estimates of $|\lambda_i-\lambda_{i,h_n}|$ for $i=1, \cdots, 200$ and the
CPU time for each eigenpair. From Figure \ref{ex3-error-time200}, we can find that Algorithm \ref{Para_Multigrid} has the optimal error estimates and
needs similar computational work for different eigenvalue even on the adaptively refined meshes. These results show that Algorithm \ref{Para_Multigrid}
can be coupled with the adaptive refinement technique.
\begin{figure}[hbt!]
\centering
\includegraphics[width=6.4cm,height=5.4cm]{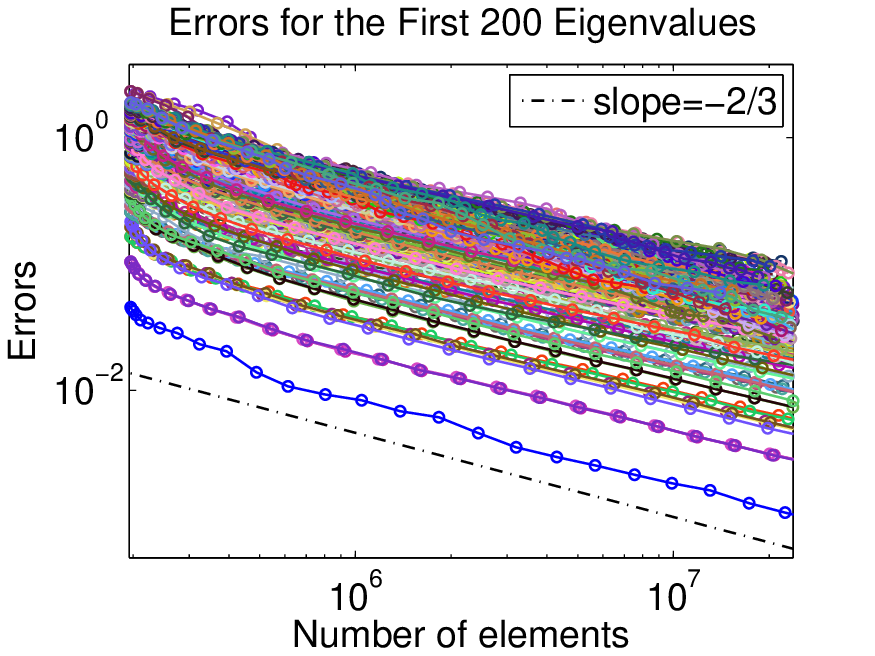}
\includegraphics[width=6.4cm,height=5.4cm]{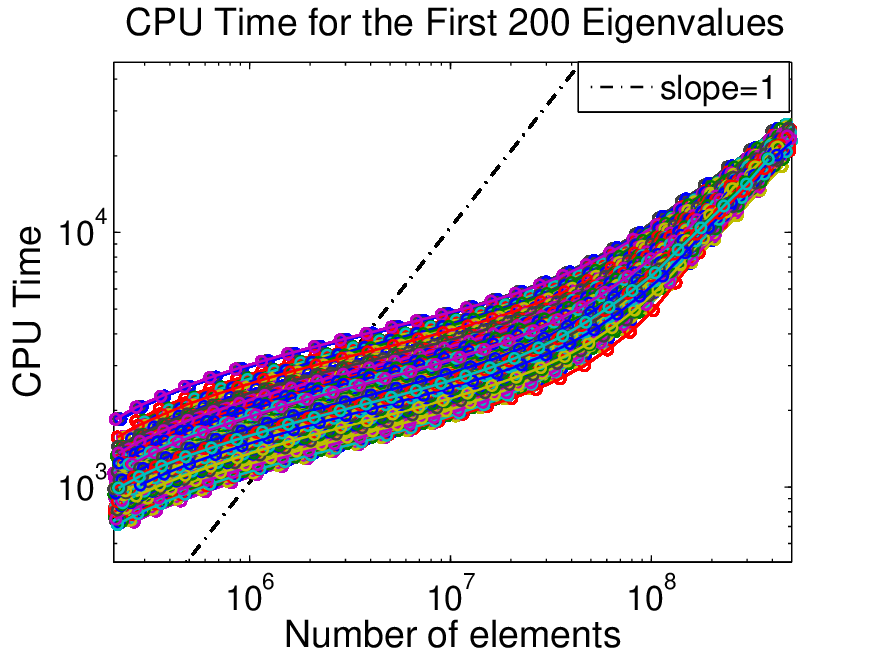}
\caption{\small  The errors and CPU time (in second) of the parallel augmented subspace
method for the first 200 eigenvalues  of Example 3.}
\label{ex3-error-time200}
\end{figure}

\subsection{Adaptive finite element method for Hydrogen atom}
In order to show the potential for electrical structure simulation, in the last example,
we consider the following model for Hydrogen atom:
Find $(\lambda,u)\in\mathcal R\times V$ such that $\|u\|_a=1$ and
\begin{equation}\label{hy}
-\frac{1}{2}\Delta u-\frac{1}{|x|}u=\lambda u, \quad {\rm in} \  \Omega,
\end{equation}
where $\Omega=\mathcal R^3$. The eigenvalues of (\ref{hy}) are $\lambda_h=-\frac{1}{2n^2}$ with multiplicity $n^2$
for any positive integer $n$. Along with the growths of $n$, it is easy to find that the spectral gap becomes
small and the multiplicity large which improve the difficulty for solving the eigenvalue problem. The aim of
this example is to show the parallel augmented subspace method in this paper can also compute the cluster eigenvalues and their eigenfunctions.
Since the eigenfunction is exponential decay, we also set $\Omega = (-4,4)^3$
and the boundary condition $u=0$ on $\partial\Omega$ in our computation.
Here the adaptive refinement is also adopted to couple with Algorithm \ref{Para_Multigrid}.

In order to check the parallel property of Algorithm \ref{Para_Multigrid},
we compute the first $200$ eigenpairs of (\ref{hy}).
In this example, we choose $H = 1/4$, $\varpi=\varpi_n=1$.
In the first step of one correction step defined by Algorithm \ref{one correction step},
$1$ multigrid step with $2$ CG steps for pre- and post-smoothing is adopted to
solve the linear problem (\ref{correct_source_exact_para}).
Figure \ref{ex4-error-time200} shows the corresponding error estimates of
$|\lambda_i-\lambda_{i,h_n}|$ for $i=1, \cdots, 200$ and the
CPU time for each eigenpair.
From Figure \ref{ex4-error-time200}, we can also find that
Algorithm \ref{Para_Multigrid} has the optimal error estimate and
needs  similar computational work for different eigenvalue.

\begin{figure}[hbt!]
\centering
\includegraphics[width=6.4cm,height=5.4cm]{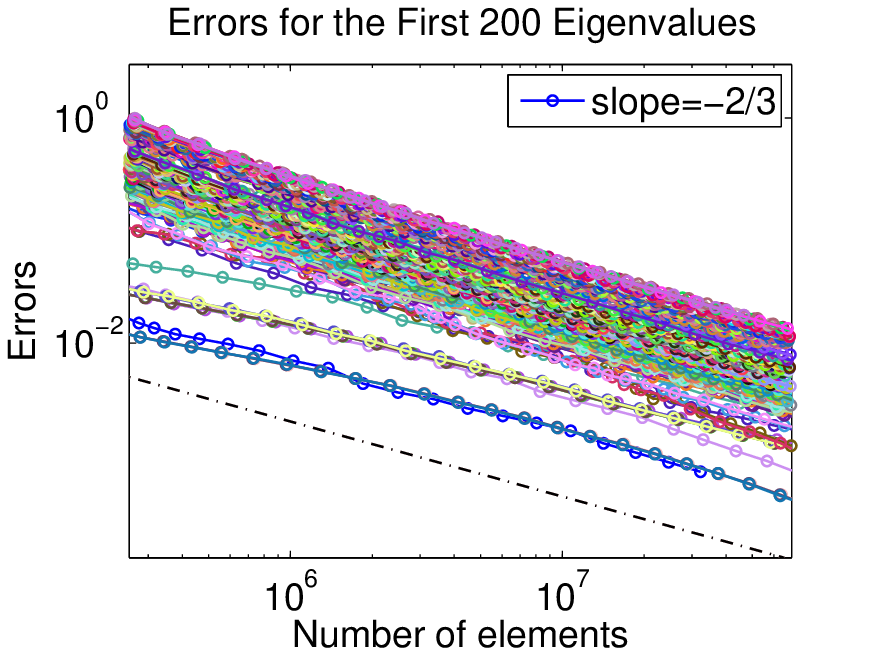}
\includegraphics[width=6.4cm,height=5.4cm]{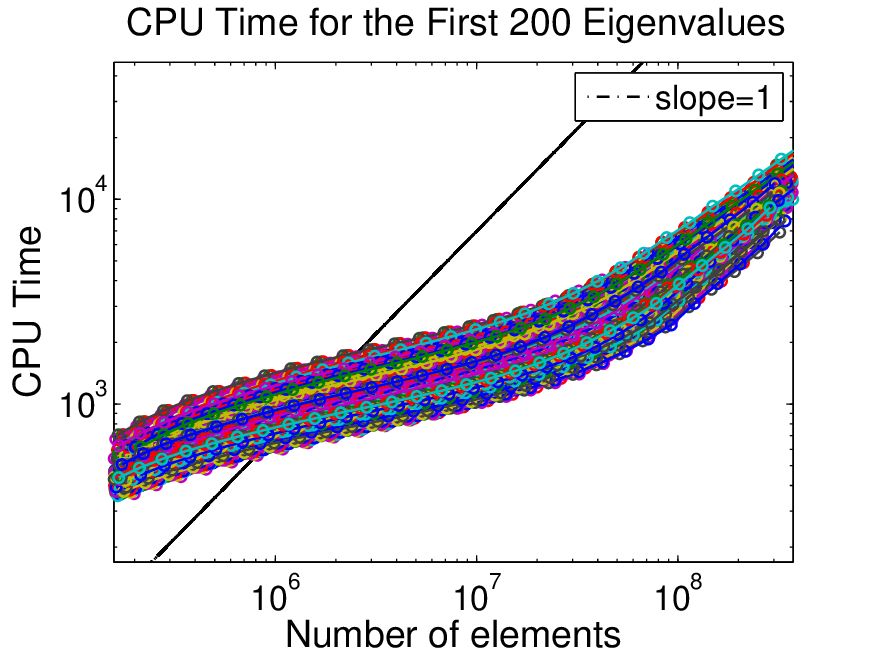}
\caption{\small  The errors and CPU time (in second) of the parallel augmented subspace
method for the first 200 eigenvalues  of Example 4.}
\label{ex4-error-time200}
\end{figure}

\section{Concluding remarks}
In this paper, we propose a parallel augmented subspace scheme for eigenvalue problems
by using the coarse space from the multigrid method. In this numerical method,
solving the eigenvalue problem in the finest space is decomposed into solving
the standard linear boundary value problems and very low dimensional eigenvalue problems.
Furthermore, for different eigenvalue, the corresponding boundary value problem
and low dimensional eigenvalue problem can be solved in the parallel way
since they are independent of each other and there exists no data exchanging.
This property means that we do not need to do the orthogonalization in the highest
dimensional space and the the efficiency and scalability can be improved obviously.

The method in this paper can be applied to other types of linear and nonlinear eigenvalue problems
such as biharmonic eigenvalue problem, Steklov eigenvalue problem, Kohn-Sham equation. These will be
our future work.



\begin{thebibliography}{17}
\bibitem{Adams}
R. A. Adams, {\em Sobolev Spaces}, Academic Press, New York, 1975.


\bibitem{BabuskaOsborn_1989}
I. Babu\v{s}ka and J. Osborn, {\em Finite element-Galerkin
approximation of the eigenvalues and eigenvectors of selfadjoint
problems}, Math. Comp., 52 (1989), 275--297.

\bibitem{BabuskaOsborn_Book}
I. Babu\v{s}ka and J. Osborn, {\em Eigenvalue Problems}, In Handbook of
Numerical Analysis, Vol. II, (Eds. P. G. Lions and Ciarlet P.G.),
Finite Element Methods (Part 1), North-Holland, Amsterdam, 641--787,
1991.


\bibitem{Bai}
Z. Bai, J. Demmel, J. Dongarra, A. Ruhe, and H. van der Vorst, editors. {\em  Templates
for the Solution of Agebraic Eigenvalue Problems: A Practical Guide},
Society for Industrial and Applied Math., Philadelphia, 2000.


\bibitem{BankDupont}
R. E. Bank and T. Dupont, {\em An optimal order process for solving finite element equations},
Math. Comp., 36 (1981), 35--51.

\bibitem{Bramble}
J. H. Bramble, {\em Multigrid Methods}, Pitman Research Notes in Mathematics, V. 294, John Wiley and Sons, 1993.


\bibitem{BramblePasciak}
J. H. Bramble and J. E. Pasciak, {\em New convergence estimates for multigrid algorithms},
Math. Comp., 49 (1987), 311--329.

\bibitem{BramblePasciakKnyazev}
J. H. Bramble, J. Pasciak, and A. Knyazev, {\em A subspace preconditioning algorithm for
eigenvector/eigenvalue computation}, Advances in Computational Mathematics, 6(1) (1996), 159--189.

\bibitem{BrambleZhang}
J. H. Bramble and X. Zhang, {\em The Analysis of Multigrid Methods}, Handbook of Numerical Analysis,
Vol. VII, P. G. Ciarlet and J. L. Lions, eds., Elsevier Science, 173--415, 2000.

\bibitem{BrennerScott}
S. Brenner and L. Scott, {\em The Mathematical Theory of Finite Element
Methods}, New York: Springer-Verlag, 1994.

\bibitem{Chatelin}
F. Chatelin, {\em Spectral Approximation of Linear Operators}, Academic
Press Inc, New York, 1983.


\bibitem{full}
H. Chen, H. Xie and F. Xu, {\em A full multigrid method for eigenvalue problems},
J. Comput. Phys., 322 (2016), 747--759.

\bibitem{Ciarlet}
P. G. Ciarlet, {\em The finite Element Method for Elliptic Problem},
North-holland Amsterdam, 1978.

\bibitem{PINVIT}
E. G. D'yakonov and M. Yu. Orekhov, {\em Minimization of the computational labor in determining
the first eigenvalues of differential operators}, Math. Notes, 27 (1980), 382--391.

\bibitem{Greiner}
W. Greiner, {\em Quantum Mechanics: An Introduction}, 3rd edn, Springer, Berlin, 1994.

\bibitem{Hackbusch}
W. Hackbusch, {\em On the computation of approximate eigenvalues and eigenfunctions of
elliptic operators by means of
a multi-grid method}, SIAM J. Numer. Anal., 16(2) (1979), 201--215.

\bibitem{Hackbusch_Book}
W. Hackbusch, {\em Multi-grid Methods and Applications}, Springer-Verlag, Berlin, 1985.

\bibitem{SLEPC}
V. Hernandez, J. E. Roman and V. Vidal, {\em  SLEPc: A scalable and flexible toolkit for the
solution of eigenvalue problems}, ACM Trans. Math. Software, 31(3) (2005), 351--362.

\bibitem{HongXieXu}
Q. Hong, H. Xie and F. Xu, {\em A multilevel correction type of adaptive finite element method for eigenvalue problems},
SIAM J. Sci. Comput.,, 40(6) (2018), A4208--A4235.


\bibitem{Knyazev}
A. Knyazev, {\em Preconditioned eigensolvers-an oxymoron}?
Electronic Transactions on Numerical Analysis, 7 (1998), 104--123.


\bibitem{Knyazev_Lobpcg}
A. Knyazev, {\em Toward the optimal preconditioned eigensolver: Locally optimal block
preconditioned conjugate gradient method}, SIAM Journal on Scientific Computing, 23(2) (2001), 517--541.

\bibitem{Knyazev2}
A. V. Knyazev, M. E. Argentati, I. Lashuk and E. E. Ovtchinnikov, {\em  Block locally optimal preconditioned
eigenvalue xolvers (BLOPEX) in hypre and PETSc},
 SIAM J. Sci. Comput., 29(5) (2007), 2224--2239.


\bibitem{KnyazevNeymeyr}
A. Knyazev and K. Neymeyr, {\em Efficient solution of symmetric eigenvalue problems
using multigrid preconditioners in the locally optimal block conjugate gradient method},
Electronic Transactions on Numerical Analysis., 15 (2003), 38--55.

\bibitem{LinXie_MultiLevel}
Q. Lin and H. Xie, {\em A multi-level correction scheme for eigenvalue problems},
Math. Comp., 84 (2015), 71--88.


\bibitem{McCormick}
S. F. McCormick, ed., {\em Multigrid Methods}. SIAM Frontiers in Applied Matmematics 3.
 Society for Industrial and Applied Mathematics, Philadelphia, 1987.


\bibitem{Saad1}
Y.~Saad, {\em Numerical Methods For Large Eigenvalue Problems}, Society for Industrial and Applied Mathematics, 2011.

\bibitem{ScottZhang}
L. R. Scott and S. Zhang, {\em Higher dimensional non-nested multigrid methods},
Math. Comp., 58 (1992), 457--466.

\bibitem{Shaidurov}
V. V. Shaidurov, {\em Multigrid methods for finite element}, Kluwer
Academic Publics, Netherlands, 1995.


\bibitem{Sorensen}
D. Sorensen, {\em Implicitly Restarted Arnoldi/Lanczos Methods for Large Scale
Eigen value Calculations}, Springer Netherlands, 1997.



\bibitem{StrangFix}
G. Strang and G. J. Fix, {\em An Analysis of the Finite Element Method},
Prentice-Hall, Eiglewood Cliffs, NJ, 1973.

\bibitem{ToselliWidlund}
A. Toselli and O. Widlund, {\em Domain Decomposition Methods: Algorithm and Theory},
Springer-Verlag, Berlin Heidelberg, 2005.

\bibitem{Xie_IMA}
H. Xie, {\em A type of multilevel method for the Steklov eigenvalue problem},
IMA J. Numer. Anal., 34 (2014), 592--608.

\bibitem{Xie_JCP}
H. Xie, {\em A multigrid method for eigenvalue problem},
J. Comput. Phys., 274 (2014),  550--561.



\bibitem{XieZhangOwhadi}
H. Xie, L. Zhang and H. Owhadi, {\em Fast eigenvalue computation with operator adapted wavelets and hierarchical subspace correction},
SIAM J. Numer. Anal, 57(6) (2019), 2519--2550.




\bibitem{Xu}
J. Xu, {\em Iterative methods by space decomposition and subspace
correction}, SIAM Review, 34(4) (1992), 581--613.

\bibitem{Xu_Two_Grid}
J. Xu, {\em A new class of iterative methods for nonselfadjoint or indefinite problems},
SIAM J. Numer. Anal., 29 (1992), 303--319.

\end{thebibliography}
\end{document}